\newtheorem*{thm*}{Theorem}
\newcommand{\ff}{{\mathcal F}}
\newtheorem{thm}{Theorem}
\newtheorem{opr}{Definition}
\newtheorem{lem}[thm]{Lemma}
\newtheorem{cla}[thm]{Claim}
\newtheorem{thrm}{Theorem}
\newtheorem{cor}[thm]{Corollary}
\date{}
\newtheorem{prop}[thm]{Proposition}
\newcounter{tmp}
\title{Intersection theorems for $\{0,\pm1\}$-vectors and $s$-cross-intersecting families}
\author{Peter Frankl, Andrey Kupavskii\footnote{Moscow Institute of Physics and Technology, \'Ecole Polytechnique F\'ed\'erale de Lausanne; Email: {\tt kupavskii@yandex.ru} \ \ Research supported in part by the Swiss National Science Foundation Grants 200021-137574 and 200020-14453 and by the grant N 15-01-03530 of the Russian Foundation for Basic Research.}}
\date{}
\begin{document}
\maketitle
\begin{abstract} In this paper we pursue two possible directions for the extension of the classical Erd\H os-Ko-Rado theorem which states that any family of $k$-element, pairwise intersecting subsets of the set $[n] := \{1,\ldots,n\}$ has cardinality at most ${n-1\choose k-1}$.

In the first part of the paper we study families of $\{0,\pm 1\}$-vectors. Denote by $\mathcal L_k$ the family of all vectors $\mathbf v$ from $\{0,\pm 1\}^n$ such that $\langle\mathbf v,\mathbf v\rangle = k$. For any $k$, most $l$ and sufficiently large $n$, as well as for small $k$ and any $l,n$, we determine the maximal size of a family $\mathcal V\subset \mathcal L_k$ satisfying  $\langle \mathbf v,\mathbf w\rangle\ge l$ for any $\mathbf v,\mathbf w\in \mathcal V$.

In the second part of the paper we study cross-intersecting pairs of families. We say that two families $\mathcal A, \mathcal B$ are \textit{$s$-cross-intersecting}, if for any $A\in\mathcal A,B\in \mathcal B$ we have $|A\cap B|\ge s$. We also say that a family $\mathcal A$ of sets is \textit{$t$-intersecting}, if for any $A_1,A_2\in \mathcal A$ we have $|A_1\cap A_2|\ge t$. For a pair of nonempty $s$-cross-intersecting $t$-intersecting families $\mathcal A,\mathcal B$ of $k$-sets,  we determine the maximal value of $|\mathcal A|+|\mathcal B|$ for sufficiently large $n$ .
\end{abstract}

 \setcounter{tmp}{\value{thrm}}
 \setcounter{thrm}{0} 
  \renewcommand\thethrm{\Alph{thrm}}
\section{Introduction}
Let $[n]= \{1,\ldots, n\}$ be an $n$-element set and for $0\le k\le n$ let ${[n]\choose k}$ denote the collection of all its $k$-element subsets. Further, let $2^{[n]}$ denote the power set of $[n]$. Any subset $\mathcal F\subset 2^{[n]}$ is called a \textit{set family} or \textit{family} for short. We say a ``$k$-set'' instead of a ``$k$-element set'' for shorthand. If $F\cap F'\ne \emptyset$ for all $F,F'\in\mathcal F$, then $\mathcal F$ is called \textit{intersecting}. The following classical result is one of the cornerstones of extremal set theory.

\begin{thm*}[Erd\H os-Ko-Rado theorem \cite{EKR}] If $\mathcal A\subset{[n]\choose k}$ is intersecting and $n\ge 2k$, then $|\mathcal A|\le {n-1\choose k-1}$ holds.
\end{thm*}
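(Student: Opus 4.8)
The plan is to prove this by Katona's cyclic permutation (averaging) argument, which yields the exact bound with essentially no case analysis. Fix labelled positions $0,1,\dots,n-1$ on a circle, and for a bijection $\sigma\colon\{0,\dots,n-1\}\to[n]$ and an index $i$ call the $k$-set $A_i(\sigma)=\{\sigma(i),\sigma(i+1),\dots,\sigma(i+k-1)\}$ (indices taken modulo $n$) an \emph{arc} of $\sigma$. I would count the pairs $(\sigma,i)$ with $A_i(\sigma)\in\mathcal A$ in two ways. On one hand a fixed $k$-set $A$ equals $A_i(\sigma)$ for exactly $k!\,(n-k)!$ bijections $\sigma$ for each of the $n$ choices of $i$, so the count equals $|\mathcal A|\cdot n\cdot k!\,(n-k)!$. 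On the other hand, the count is at most $k\cdot n!$ provided the following claim holds.

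\emph{Claim.} For $n\ge 2k$, among the $n$ arcs of a single $\sigma$ an intersecting family contains at most $k$. I would prove it as Katona did: assume some arc $A=A_i(\sigma)\in\mathcal A$; then the arcs meeting $A$ and different from $A$ are precisely the $2k-2$ arcs $A_{i-t}(\sigma)$ and $A_{i+t}(\sigma)$ for $t=1,\dots,k-1$, and these group into the $k-1$ pairs $\{A_{i-t}(\sigma),\,A_{i+k-t}(\sigma)\}$. A direct inspection of the occupied positions shows that the two arcs in each pair are disjoint, and this is exactly where the hypothesis $n\ge 2k$ enters: disjointness on the wrap-around side amounts to the inequality $2k-1<n$. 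Since $\mathcal A$ is intersecting it can contain at most one arc from each of the $k-1$ pairs, hence at most $k$ arcs of $\sigma$ in total, including $A$. Combining the two counts gives $|\mathcal A|\cdot n\cdot k!\,(n-k)!\le k\cdot n!$, i.e.\ $|\mathcal A|\le \frac{k}{n}\binom nk=\binom{n-1}{k-1}$.

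The only genuinely substantive step is the Claim, and within it the explicit identification of the $2k-1$ arcs through a fixed arc together with the verification that the natural matching produces disjoint pairs precisely when $n\ge 2k$; once that combinatorial fact is in place, the remainder is the double-counting bookkeeping above. I would also remark that an alternative, more hands-on proof runs through the shifting (compression) technique: one repeatedly applies $(i,j)$-shifts, notes that each shift preserves both the intersecting property and the cardinality, and is thereby reduced to the case of a left-compressed intersecting family, for which one argues directly that every member must contain the element $1$, giving the same bound $\binom{n-1}{k-1}$.
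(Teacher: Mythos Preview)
The paper does not supply its own proof of the Erd\H os--Ko--Rado theorem; it is quoted in the introduction as a classical result with a citation to \cite{EKR}, so there is no in-paper argument to compare your attempt against. Your main argument---Katona's cyclic-permutation double count---is correct and is one of the standard modern proofs: the Claim and the pairing $\{A_{i-t}(\sigma),A_{i+k-t}(\sigma)\}$ are exactly right, the disjointness check does reduce to $2k-1<n$, and the arithmetic $|\mathcal A|\cdot n\cdot k!\,(n-k)!\le k\cdot n!$ yields $|\mathcal A|\le\binom{n-1}{k-1}$.

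One correction to your closing remark on the alternative route: it is \emph{not} true that in a left-compressed intersecting family every member contains the element~$1$. For instance $\{\{1,2\},\{1,3\},\{2,3\}\}\subset\binom{[4]}{2}$ is shifted and intersecting, yet $\{2,3\}$ avoids~$1$. The shifting proof instead proceeds by induction on $n$: for $n=2k$ one pairs each $A$ with its complement, and for $n>2k$ one uses shiftedness to show that $\{A\setminus\{n\}:n\in A\in\mathcal A\}$ is itself intersecting in $[n-1]$ (pick $j\in[n-1]\setminus(A\cup A')$ and apply $S_{j,n}$), then combines the inductive bounds $\binom{n-2}{k-2}+\binom{n-2}{k-1}=\binom{n-1}{k-1}$. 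This does not affect the validity of your primary proof, which stands on its own.
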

This is one of the first results in extremal set theory and probably the first result about intersecting families.

Numerous results extended the Erd\H os-Ko-Rado theorem in different ways. One of the directions is concerned with \textit{nontrivial} intersecting families. We say that an intersecting family is {\it nontrivial} if the intersection of {\it all} sets from the family is empty. Note that the size of a family of all $k$-sets containing a single element matches the bound from the Erd\H os-Ko-Rado theorem. Probably, the best known result in this direction is the Hilton-Milner theorem \cite{HM}, which determines the maximum size of a non-trivial intersecting family.

In the same paper Hilton and Milner dealt with pairs of \textit{cross-intersecting} families. We say that the families $\mathcal A,\mathcal B\subset 2^{[n]}$ are \textit{cross-intersecting} if for every $A\in\mathcal A$ and $B\in \mathcal B$ we have $A\cap B\ne \emptyset$. Hilton and Milner proved the following inequality:
\begin{thm*}[Hilton and Milner \cite{HM}] Let $\mathcal A,\mathcal B\subset{[n]\choose k}$ be non-empty cross-intersecting families with $n\ge 2k$. Then $|\mathcal A|+|\mathcal B|\le {n\choose k}-{n-k\choose k}+1.$
\end{thm*}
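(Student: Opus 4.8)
The extremal configuration behind this bound is $\mathcal{B}=\{B_0\}$ for a single $k$-set $B_0$ together with $\mathcal{A}=\{A\in\binom{[n]}{k}:A\cap B_0\neq\emptyset\}$, which has $|\mathcal{A}|=\binom{n}{k}-\binom{n-k}{k}$; so the content of the theorem is that enlarging $\mathcal{B}$ beyond one set costs at least as much in $|\mathcal{A}|$ as it gains in $|\mathcal{B}|$. The plan is to pass to complements and invoke the Kruskal--Katona theorem. Set $\mathcal{C}:=\{[n]\setminus B:B\in\mathcal{B}\}\subseteq\binom{[n]}{n-k}$, so $|\mathcal{C}|=|\mathcal{B}|$. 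Being cross-intersecting says exactly that no $A\in\mathcal{A}$ is contained in any $C\in\mathcal{C}$, i.e.\ $\mathcal{A}$ is disjoint from the $k$-element shadow $\partial^{(k)}(\mathcal{C}):=\{A\in\binom{[n]}{k}:A\subseteq C\text{ for some }C\in\mathcal{C}\}$. Hence $|\mathcal{A}|\le\binom{n}{k}-|\partial^{(k)}(\mathcal{C})|$, and since $\mathcal{A}\neq\emptyset$ we also get $\partial^{(k)}(\mathcal{C})\neq\binom{[n]}{k}$. (Here $n\ge 2k$ is what makes $k$-shadows of $(n-k)$-sets meaningful.)

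So it suffices to prove the shadow inequality: for every nonempty $\mathcal{C}\subseteq\binom{[n]}{n-k}$ with $\partial^{(k)}(\mathcal{C})\neq\binom{[n]}{k}$ one has $|\partial^{(k)}(\mathcal{C})|\ge|\mathcal{C}|+\binom{n-k}{k}-1$; plugging this into the previous display gives precisely $|\mathcal{A}|+|\mathcal{B}|\le\binom{n}{k}-\binom{n-k}{k}+1$. To establish the shadow inequality, note first that by Kruskal--Katona, for a fixed value of $|\mathcal{C}|$ the quantity $|\partial^{(k)}(\mathcal{C})|$ is minimized when $\mathcal{C}$ is an initial segment of the colexicographic order on $\binom{[n]}{n-k}$, and for such $\mathcal{C}$ the shadow $\partial^{(k)}(\mathcal{C})$ is again a colex initial segment; if a colex initial segment of some size already has full $k$-shadow, then so does every family of that size, so the hypothesis is vacuous there. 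Thus one may assume $\mathcal{C}$ is a colex initial segment with proper shadow, write $|\mathcal{C}|$ in its cascade representation, and use the Kruskal--Katona formula for iterated shadows, whereupon the inequality becomes an elementary comparison of binomial coefficients. The case $k=1$ is degenerate and handled separately: cross-intersecting families of singletons force $\mathcal{A}=\mathcal{B}=\{\{x\}\}$, so $|\mathcal{A}|+|\mathcal{B}|=2=\binom{n}{1}-\binom{n-1}{1}+1$, matching the bound.

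The main obstacle is the shadow inequality, because $|\partial^{(k)}(\mathcal{C})|-|\mathcal{C}|$ is \emph{not} monotone in $|\mathcal{C}|$: growing $\mathcal{C}$ by one $(n-k)$-set in colex order can add no new $k$-subset to the shadow even while the shadow is still proper, so one cannot simply say ``each new set pays for itself''. Instead one must check that the running quantity equals $\binom{n-k}{k}-1$ when $|\mathcal{C}|=1$ and never dips below it before the shadow becomes everything --- each time it returns to this value the shadow has just been completed to a full $\binom{[m]}{k}$ --- which is exactly the bookkeeping encoded in the cascade form of Kruskal--Katona; carrying this out uniformly for all $n\ge 2k$ (the bound is already tight at $n=2k$, where it reads $2\binom{2k-1}{k-1}\le\binom{2k}{k}$) is the real work. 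An alternative route avoids complements: apply compressions $S_{ij}$ to $\mathcal{A}$ and $\mathcal{B}$ simultaneously --- these preserve sizes, nonemptiness and cross-intersection --- to reduce to shifted families, then analyze the shifted structure directly; this leads to essentially the same numerical inequalities.
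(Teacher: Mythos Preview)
The paper does not contain a proof of this statement: it is quoted as a classical result of Hilton and Milner, with a pointer to \cite{FK2} for a simple proof and to \cite{FT} for generalizations, and is then used as a black box (e.g.\ inside the proof of Theorem~\ref{thm2}). So there is no in-paper argument to compare against.

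Your outline is the standard modern route and is correct as far as it goes: passing to complements $\mathcal C=\{[n]\setminus B:B\in\mathcal B\}$ turns cross-intersection into $\mathcal A\cap\partial^{(k)}(\mathcal C)=\emptyset$, and Kruskal--Katona lets one replace $\mathcal C$ by a colex initial segment; this is essentially the approach of \cite{FT} (and of the reference \cite{FK2} the paper points to). The one genuine gap is that you never actually prove the shadow inequality
\[
|\partial^{(k)}(\mathcal C)|\ \ge\ |\mathcal C|+\binom{n-k}{k}-1
\quad\text{for nonempty }\mathcal C\text{ with }\partial^{(k)}(\mathcal C)\ne\binom{[n]}{k};
\]
you correctly identify this as ``the real work'' and describe in words how the cascade form of Kruskal--Katona would handle it, but the verification is not carried out. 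So what you have is a sound plan rather than a complete proof. A quick way to close the gap, once $\mathcal C$ is a colex initial segment: nonemptiness forces $[n-k]\in\mathcal C$, giving $\binom{[n-k]}{k}\subset\partial^{(k)}(\mathcal C)$, while properness of the shadow forces $|\mathcal C|\le\binom{n-1}{n-k-1}$; one then checks via the Kruskal--Katona cascade that on this range the function $c\mapsto|\partial^{(k)}(\text{first }c\text{ colex }(n{-}k)\text{-sets})|-c$ never drops below its value $\binom{n-k}{k}-1$ at $c=1$, exactly the monotonicity-with-resets you describe.
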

This inequality was generalized by Frankl and Tokushige \cite{FT} to the case $\mathcal A\subset{[n]\choose a}$, $\mathcal B\subset{[n]\choose b}$ with $a\ne b$, as well as to more general constraints on the sizes of $\mathcal A, \mathcal B$. A simple proof of the theorem above may be found in \cite{FK2}.

 Another natural  direction for the generalizations deals with more general constraints on the sizes of pairwise intersections. A family $\ff$ is called \textit{$t$-intersecting} if for any $F_1,F_2\in \ff$ we have $|F_1\cap F_2|\ge t$. The Complete Intersection Theorem due to Alswede and Khachatrian \cite{AK}, continuing previous work done by Frankl \cite{F1} and Frankl and F\" uredi \cite{FF}, gave an exhaustive answer to the question of how big a $t$-intersecting family of $k$-sets could be. We discuss it in more detail in Section \ref{sec3}.

One more example of a result of such kind is a  theorem due to Frankl and Wilson \cite{FW}. It states that, if $n$ is a prime power and $\mathcal F\subset {[4n-1]\choose 2n-1}$ satisfies $|F_1\cap F_2|\ne n-1$ for any $F_1,F_2\in \ff$, then the size of $\mathcal F$ is at most ${4n-1\choose n-1}$, which is exponentially smaller than ${4n-1\choose 2n-1}$. This theorem has several applications in geometry.

The third direction is to generalize the EKR theorem to more general classes of objects. Out of the numerous generalizations of such kind we are going to talk only about the ones concerned with $\{0,\pm 1\}$-vectors, that is, vectors that have coordinates from the set $\{-1,0,1\}$. In this case a natural replacement for the size of the intersection is  the scalar product. A generalization of \cite{FW} to the case of $\{0,\pm 1\}$-vectors due to Raigorodskii \cite{Rai1} led to improved bounds in several geometric questions, such as Borsuk's problem and the chromatic number of the space. However, it seems plausible to improve Raigorodskii's result, once one achieves a better understanding of ``intersecting'' families of $\{0,\pm 1\}$-vectors. Partly motivated by that, in our recent work \cite{FK,FK11} we managed to extend EKR theorem to the case of $\{0,\pm 1\}$-vectors with fixed number of $1$'s and $-1$'s. Let us formulate the results. We need the following notation.

Let $s\ge t\ge 1$ be integers and let $\mathcal V(n,s,t)$ denote the set of all $(0,\pm 1)$-vectors of length $n$ and having exactly $s$ coordinates equal to $+1$ and $t$ coordinates equal to $-1$. Put $$g(n,s,t):=\max\Big\{|\mathcal V|: \mathcal V\subset \mathcal V(n,s,t)\text{ and } \langle\mathbf v,\mathbf w\rangle> -2t\text{ for any }\mathbf v,\mathbf w\in \mathcal V\Big\}.$$
Note that, for $n\ge 2s$, $-2t$ is the smallest possible scalar product between two vectors in $\mathcal V(n,s,t)$: indeed, to achieve it, both vectors have to have their $-1$'s in front of the $1$'s of the other vector, and the remaining $s-t$ $1$'s  in front of $0$'s.
 \begin{thrm}[Frankl, Kupavskii \cite{FK}]\label{fkjcta} We have
\begin{align*}g(n,s,1)=&  s{n-1\choose s} \text{\ \ \qquad \qquad \quad \ \ \ \ \ for\ }2s\le n\le s^2,\\
g(n,s,1)=& s{s^2-1\choose s}+{s^2\choose s}+{s^2+1\choose s}+\ldots +{n-1\choose s}  \text{\ \ \ \ \ \ \ \ for\ } n> s^2.
\end{align*}
\end{thrm}

\begin{thrm}[Frankl, Kupavskii \cite{FK11}]\label{fkbar} We have
\begin{equation}\label{eq000}{n\choose s+t}{s+t-1\choose t-1}\le g(n,s,t)\le {n\choose s+t}{s+t-1\choose t-1}+{n\choose 2t}{2t\choose t}{n-2t-1\choose s-t-1}.\end{equation}
\end{thrm}

The contribution of this paper is two-fold. First, in several scenarios we determine  the maximal size of a family of $\{0,\pm 1\}$-vectors of fixed length and with restrictions on the scalar product. The main results are stated in Section \ref{sec22}. Second, we prove results analogous to the Hilton-Milner theorem stated above, but for pairs of families that are $s'$-cross-intersecting and $t'$-intersecting. For details and precise definitions see Section \ref{sec3}.

\section{Families of $\{0,\pm 1\}$-vectors}
Denote by $\mathcal L_k$ the family of all vectors $\mathbf v$ from $\{0,\pm 1\}^n$ such that $\langle\mathbf v,\mathbf v\rangle = k$. Note that $|\mathcal L_k| = 2^k{n\choose k}$.
This section is mostly devoted to the study of the quantity below.
\begin{equation}\label{eq2} F(n,k,l) := \max\{|\mathcal V|:\mathcal V\subset \mathcal L_k, \forall \, \mathbf v,\mathbf w\in\mathcal V\ \  \langle\mathbf v,\mathbf w\rangle\ge l\}.
\end{equation}

Recall the following theorem of Katona:

\begin{thm*}[Katona, \cite{K}]\label{thmkat} Let $n>s>0$ be fixed integers. If $\mathcal U\subset 2^{[n]}$ is a family of sets such that for any $U,V\in \mathcal U$ we have $|U \cup V|\le s$ then
\begin{equation}\label{eq1} |\mathcal U|\le f(n,s) := \left\{\begin{split}\sum_{i=0}^{s/2} {n\choose i}\ \ \ \ \ \ \ \ \ \ \ \  \text{if } s\text{ is even},\\ 2\sum_{i=0}^{(s-1)/2} {n-1\choose i}\ \ \ \ \text{if } s\text{ is odd.}\end{split}\right.
\end{equation}
Moreover, for $n\ge s+2$ the equality is attained only for the following families. If $s$ is even, than it is the family $\mathcal U^s$ of all sets of size at most $s/2$. If $s$ is odd, then it is one of the families $U^s_j$ of all sets that intersect $[n]-\{j\}$ in at most $(s-1)/2$ elements,  where $1\le j\le n$.
\end{thm*}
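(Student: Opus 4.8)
The plan is to dualize by complementation and then to recast the extremal problem for bounded-union families as the classical one for (not necessarily uniform) $t$-intersecting families. For $U\subseteq[n]$ put $U^c:=[n]\setminus U$. Since $|U\cup V|\le s$ is equivalent to $|U^c\cap V^c|\ge n-s$, setting $t:=n-s$ (so $1\le t<n$) the map $U\mapsto U^c$ carries $\mathcal U$ bijectively onto a $t$-intersecting family $\mathcal G\subseteq 2^{[n]}$ with $|\mathcal U|=|\mathcal G|$. A short parity/Pascal computation shows that $f(n,s)$ equals the number of subsets of $[n]$ of size $\ge(n+t)/2$ when $n+t$ is even, and the number of subsets meeting $[n]\setminus\{j\}$ in at least $(n+t-1)/2$ elements when $n+t$ is odd; under complementation $\mathcal U^s$ corresponds to the first family and $U^s_j$ to the second. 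So it suffices to prove the corresponding maximum and uniqueness statement for $t$-intersecting families.

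The next step is compression. Applying the left-shifts $S_{ij}$ ($i<j$) repeatedly — each of which preserves the $t$-intersecting property and the cardinality — I may assume $\mathcal G$ is shifted (it is also harmless to pass to the upward closure first, so $\mathcal G$ is a shifted up-set). For a shifted $t$-intersecting family one has the sharp structural fact that any two members $A,B$ satisfy $|A\cap B\cap\{1,\dots,|A|+|B|-t\}|\ge t$. Letting $p_0$ be the smallest size of a member and noting $\{1,\dots,p_0\}\in\mathcal G$, this forces $p_0\ge t$ and gives, for each size $p$, that every $p$-element member of $\mathcal G$ meets $\{1,\dots,p_0+p-t\}$ in at least $t$ points, whence each layer size $|\mathcal G\cap\binom{[n]}{p}|$ is bounded by a binomial-sum quantity depending only on $n,p,p_0$.

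The combinatorial heart is then to show that summing these layer bounds never exceeds $f(n,s)$, with equality pinning down $\mathcal G$. In the even case this reduces to a monotonicity computation showing the upper bound is maximized at $p_0=(n+t)/2$, where it evaluates exactly to the number of sets of size $\ge(n+t)/2$; tracking equality forces $\mathcal G$ to be precisely that family, hence $\mathcal U=\mathcal U^s$. The odd case is more delicate, because the layerwise constraint above is no longer tight by itself on the critical layer of size $(n+t-1)/2$: there one must use the shifted-family inequality for all pairs, not just against $\{1,\dots,p_0\}$, equivalently feed the problem on that single layer into the Kruskal–Katona theorem (or the Complete Intersection Theorem) together with its equality case, to conclude that $\mathcal G$ consists of all larger sets plus a maximum shifted $t$-intersecting family of $((n+t-1)/2)$-sets, which is the complement of some $U^s_j$. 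An alternative route throughout is induction on $n$ via the two links $\mathcal U_x:=\{U\setminus x:x\in U\in\mathcal U\}$ and $\mathcal U_{\bar x}:=\{U\in\mathcal U:x\notin U\}$, governed by $f(n-1,s-1)$ and $f(n-1,s)$ respectively; but then one must exploit the cross-union condition between them, since naively adding the two bounds overshoots $f(n,s)$.

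I expect this last step to be the main obstacle: converting the ``meets an initial segment in $\ge t$ elements'' data into the sharp bound and the exact list of extremal families \emph{for every} $n\ge s+2$, not merely for $n$ large. The odd case and the uniqueness part are where the full force of the compression/shadow machinery is needed, and also where the precise hypothesis $n\ge s+2$ gets used — both to rule out the small sporadic optima (such as families sitting inside a single $s$-set) and to verify that undoing the shifting on the extremal configurations recovers only the families $\mathcal U^s$ and $U^s_j$ listed in the statement.
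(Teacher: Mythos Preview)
The paper does not prove this statement. Katona's theorem is \emph{cited} as a known result (``Recall the following theorem of Katona'') and is used later as a black box; there is no proof of it anywhere in the paper to compare your proposal against.

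As a standalone argument, your outline is the right reduction --- complementation turns the bounded-union problem into the classical non-uniform $t$-intersecting problem, which is precisely how Katona's theorem is usually phrased --- but what you have written is a plan, not a proof. You explicitly flag the ``combinatorial heart'' (summing the layer bounds and handling the odd case via Kruskal--Katona or a cross-condition) as an unresolved obstacle, and you do not carry out the uniqueness analysis. The shifting framework and the dual formulation are correct and standard; however, the step where you claim the layer sums are maximized at $p_0=(n+t)/2$ needs an actual computation, and the odd-case treatment you sketch (invoking the Complete Intersection Theorem on the critical layer) is circular if one is trying to prove Katona's theorem from scratch, since the relevant case of that theorem is essentially the uniform Katona result itself. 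If you want a self-contained proof, the cleanest route is the one in Katona's original paper: a direct induction on $n$ using the partition into $\mathcal U_{\bar x}$ and $\mathcal U_x$ together with the identity $f(n,s)=f(n-1,s)+f(n-1,s-1)$ --- but, as you note, one must exploit that $\mathcal U_{\bar x}$ and $\{U\setminus\{x\}:x\in U\in\mathcal U\}$ are cross-constrained, not just individually bounded.
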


Given two sets $U,V$,  we denote the symmetric difference of these two sets by $U\triangle V$, that is, $U\triangle V:=U\backslash V\cup V\backslash U$. A theorem due to Kleitman states that the bound (\ref{eq1}) holds for a more general class of families.

\begin{thm*}[Kleitman, \cite{Kl}] \label{thmkl} If for any two sets $U,V$ from a family $\mathcal U\subset 2^{[n]}$ we have  $|U \triangle V|\le s,$ then the bound (\ref{eq1}) holds for $\mathcal U$.
\end{thm*}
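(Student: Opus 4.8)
The plan is to deduce Kleitman's theorem directly from Katona's theorem by means of a single compression, so that no new extremal analysis is needed. Given a family $\mathcal U\subseteq 2^{[n]}$ with $|U\triangle V|\le s$ for all $U,V\in\mathcal U$, I would introduce, for each element $x\in[n]$, the \emph{down-compression} $D_x$: replace every $A\in\mathcal U$ with $x\in A$ and $A\setminus\{x\}\notin\mathcal U$ by $A\setminus\{x\}$, and leave every other set untouched. The operation $D_x$ is injective on $\mathcal U$, so $|D_x(\mathcal U)|=|\mathcal U|$; and applying $D_1,\dots,D_n$ repeatedly strictly decreases $\sum_{A}|A|$ whenever it changes anything, hence the process terminates at a family $\mathcal D$ with $|\mathcal D|=|\mathcal U|$ that is stable under every $D_x$ — i.e.\ a down-set (ideal).

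The crucial point to verify is that each $D_x$ preserves the diameter bound: if $|A\triangle B|\le s$ then $|D_x(A)\triangle D_x(B)|\le s$. The cases in which $x$ belongs to at most one of $A,B$ are routine, since removing $x$ from a set containing it can only decrease the symmetric difference with a set not containing $x$, and leaves it unchanged with a set also containing $x$. The one delicate case is $x\in A\cap B$ with exactly one set, say $A$, getting compressed: then $D_x(B)=B$ forces $B\setminus\{x\}\in\mathcal U$, and since $A,B\setminus\{x\}\in\mathcal U$ the hypothesis gives $|A\triangle(B\setminus\{x\})|=|A\triangle B|+1\le s$, whence $|D_x(A)\triangle D_x(B)|=|(A\setminus\{x\})\triangle B|=|A\triangle B|+1\le s$. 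Thus $\mathcal D$ again has all pairwise symmetric differences at most $s$. I expect this verification — in particular noticing that the hypothesis on $\mathcal U$ is exactly what rescues the ``$+1$'' subcase — to be the main (indeed essentially the only) obstacle.

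Finally, for a down-set $\mathcal D$ the symmetric-difference condition automatically upgrades to the union condition required by Katona's theorem: given $A,B\in\mathcal D$, the set $A\setminus B$ is a subset of $A$, hence lies in $\mathcal D$; it is disjoint from $B$; and $(A\setminus B)\triangle B=A\cup B$, so $|A\cup B|=|(A\setminus B)\triangle B|\le s$. Therefore $\mathcal D$ satisfies the hypothesis of Katona's theorem, which yields $|\mathcal U|=|\mathcal D|\le f(n,s)$, i.e.\ the bound (\ref{eq1}). Since the statement asserts only the inequality and not the list of extremal configurations, no further analysis of the equality cases is needed.
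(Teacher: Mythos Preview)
Your argument is correct: the down-compression $D_x$ preserves the diameter condition (your handling of the ``$+1$'' subcase is exactly right, using that $B$ not being compressed forces $B\setminus\{x\}\in\mathcal U$), the process terminates at a down-set, and for a down-set the observation $(A\setminus B)\triangle B=A\cup B$ upgrades the symmetric-difference bound to a union bound, so Katona's theorem applies.

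However, there is nothing to compare against: the paper does \emph{not} prove Kleitman's theorem. It is stated as a known result cited from \cite{Kl} and then used as a black box (in Proposition~\ref{prop2} and in the proof of Theorem~\ref{thm2}). So your proposal is a valid self-contained proof of a statement that the paper merely quotes. For what it is worth, the compression-to-a-down-set strategy you wrote up is essentially the classical route to Kleitman's theorem.
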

Note that there is no uniqueness counterpart in Kleitman's theorem.

\subsection{Simple properties of $F(n,k,l)$}
First we state and prove some simple observations concerning $F(n,k,l)$.
\begin{prop}\label{prop1} Fix any $n\ge k\ge 1$.   Then $F(n,k,-k+1) = 2^{k-1}{n\choose k} = |\mathcal L_k|/2.$
\end{prop}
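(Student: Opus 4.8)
The plan is to reduce the scalar-product constraint to a purely combinatorial antipodality condition. First I would observe that for any $\mathbf v,\mathbf w\in\mathcal L_k$ one has $\langle\mathbf v,\mathbf w\rangle\ge -k$, with equality precisely when $\mathbf w=-\mathbf v$: each summand $v_iw_i$ lies in $\{-1,0,1\}$, and since $\mathbf v$ has exactly $k$ nonzero coordinates the sum can equal $-k$ only if $w_i=-v_i$ on the whole support of $\mathbf v$, which (as $\mathbf w$ also has exactly $k$ nonzero coordinates) forces the two supports to coincide and all the corresponding signs to be opposite. Consequently the requirement $\langle\mathbf v,\mathbf w\rangle\ge -k+1$ for all $\mathbf v,\mathbf w\in\mathcal V$ is equivalent to saying that $\mathcal V$ contains no antipodal pair $\{\mathbf v,-\mathbf v\}$; here the diagonal case $\mathbf v=\mathbf w$ imposes nothing, since $\langle\mathbf v,\mathbf v\rangle=k\ge -k+1$ whenever $k\ge 1$.

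Next, for the upper bound I would partition $\mathcal L_k$ into the $|\mathcal L_k|/2=2^{k-1}{n\choose k}$ two-element classes $\{\mathbf v,-\mathbf v\}$; these are genuine pairs because $\mathbf v\ne -\mathbf v$ for every $\mathbf v\in\mathcal L_k$ (as $k\ge 1$ forces a nonzero coordinate). By the previous paragraph any admissible $\mathcal V$ meets each class in at most one vector, hence $|\mathcal V|\le 2^{k-1}{n\choose k}$.

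For the matching lower bound I would exhibit an explicit family of this size, for instance the collection of all $\mathbf v\in\mathcal L_k$ whose first nonzero coordinate equals $+1$: this is a transversal of the antipodal classes, so it has exactly $2^{k-1}{n\choose k}$ elements and visibly contains no antipodal pair, hence is admissible. Combining the two bounds gives $F(n,k,-k+1)=2^{k-1}{n\choose k}=|\mathcal L_k|/2$. There is essentially no obstacle in this argument; the only point needing a line of care is the equality analysis in the first step, namely that $\langle\mathbf v,\mathbf w\rangle=-k$ forces $\mathbf w=-\mathbf v$, which is exactly where the constraint $\langle\mathbf v,\mathbf v\rangle=\langle\mathbf w,\mathbf w\rangle=k$ is used.
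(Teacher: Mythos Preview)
Your argument is correct and follows exactly the same idea as the paper's proof: pair each $\mathbf v\in\mathcal L_k$ with its antipode $-\mathbf v$ and note that an admissible family can pick at most one vector from each pair. You have simply written out in detail the justification (that $\langle\mathbf v,\mathbf w\rangle=-k$ forces $\mathbf w=-\mathbf v$) and an explicit transversal, which the paper leaves implicit.
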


\begin{proof} We split vectors from $\mathcal L_k$ in pairs $\mathbf v, \mathbf w$ so that $\langle\mathbf v,\mathbf w\rangle = -k$. We can take exactly one vector out of each pair in the family.
\end{proof}
For any $\mathbf v = (v_1,\ldots, v_n)$ and any vector family $\mathcal V$ define $S(\mathbf v) = \{i:v_i\ne 0\}$ and $\mathcal V(S) = \{\mathbf v\in\mathcal V: S(\mathbf v) = S\}$. We also define $N(\mathbf v) = \{i:v_i = -1\}$.

\begin{prop}\label{prop2} We have $F(2t,2t,2s) = F(2t,2t,2s-1) = f(2t,t-s)$ and $F(2t+1,2t+1,2s) = F(2t+1,2t+1,2s+1) = f(2t+1,t-s).$
\end{prop}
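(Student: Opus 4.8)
The plan is to reduce the statement to the theorems of Kleitman and Katona quoted above, via the obvious bijection between $\mathcal L_n$ and $2^{[n]}$ (where $n$ is $2t$ or $2t+1$).

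First I would observe that when $k=n$ every vector of $\mathcal L_n$ has all coordinates equal to $\pm1$, since $\langle\mathbf v,\mathbf v\rangle$ simply counts the nonzero coordinates. Hence $\mathbf v\mapsto N(\mathbf v)$ is a bijection from $\mathcal L_n$ onto $2^{[n]}$. The key computation is that for $\mathbf v,\mathbf w\in\mathcal L_n$ a coordinate $i$ contributes $+1$ to $\langle\mathbf v,\mathbf w\rangle$ when $v_i=w_i$ and $-1$ otherwise, and $v_i\ne w_i$ holds exactly when $i\in N(\mathbf v)\triangle N(\mathbf w)$; therefore
\[
\langle\mathbf v,\mathbf w\rangle = n-2\,|N(\mathbf v)\triangle N(\mathbf w)|.
\]
Since the left-hand side of the resulting inequality $|N(\mathbf v)\triangle N(\mathbf w)|\le (n-l)/2$ is an integer, the constraint $\langle\mathbf v,\mathbf w\rangle\ge l$ is equivalent to $|N(\mathbf v)\triangle N(\mathbf w)|\le s$ with $s:=\lfloor(n-l)/2\rfloor$.

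For the upper bound, given $\mathcal V\subset\mathcal L_n$ feasible for $F(n,n,l)$, the image family $\mathcal U:=\{N(\mathbf v):\mathbf v\in\mathcal V\}$ satisfies $|U\triangle V|\le s$ for all $U,V\in\mathcal U$, so Kleitman's theorem gives $|\mathcal V|=|\mathcal U|\le f(n,s)$. For the matching lower bound I would take a Katona-extremal family $\mathcal U$ — all sets of size at most $s/2$ if $s$ is even, and all sets meeting $[n]\setminus\{n\}$ in at most $(s-1)/2$ points if $s$ is odd. These families satisfy $|U\cup V|\le s$, hence $|U\triangle V|\le|U\cup V|\le s$ because $U\triangle V\subseteq U\cup V$, and pulling them back through the bijection yields a feasible $\mathcal V$ with $|\mathcal V|=f(n,s)$. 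This establishes $F(n,n,l)=f\!\left(n,\lfloor(n-l)/2\rfloor\right)$ in general.

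Finally I would substitute the four pairs $(n,l)$ and check that the floor collapses to $t-s$ in each case: for $n=2t$ both $l=2s$ and $l=2s-1$ give $\lfloor(2t-l)/2\rfloor=t-s$, and for $n=2t+1$ both $l=2s$ and $l=2s+1$ give $\lfloor(2t+1-l)/2\rfloor=t-s$. There is no real obstacle here; the only points requiring care are this parity bookkeeping at the end and the remark that the extremal families of Katona's theorem, phrased there in terms of unions, remain valid under the weaker symmetric-difference bound needed here — which, as noted, is immediate.
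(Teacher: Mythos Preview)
Your proof is correct and follows the same route as the paper: translate the scalar product condition into a symmetric-difference bound via $\langle\mathbf v,\mathbf w\rangle = n - 2|N(\mathbf v)\triangle N(\mathbf w)|$ and invoke Kleitman's theorem. The paper's proof is terser, writing only the key identity and ``follows from Kleitman's Theorem,'' whereas you spell out the lower bound via Katona's extremal families and the parity bookkeeping explicitly.
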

\begin{proof} For any $\mathbf v,\mathbf w\in \mathcal L_{2t}$ we have $\langle\mathbf v,\mathbf w\rangle= 2t-2|N(\mathbf v)\triangle N(\mathbf w)|$. Therefore, the statement of the proposition follows from Kleitman's Theorem.
\end{proof}
We say that a family $\mathcal V\subset \mathcal L_k$ of vectors is \textit{homogeneous}, if for every $i\in[n]$ the $i$'th coordinates of vectors from $\mathcal V$ are \textit{all} non-negative or \textit{all} non-positive.
\begin{prop}\label{prop3} For $n>k$ we have $F(n,k,k-1) = \max\{k+1,n-k+1\}$.
\end{prop}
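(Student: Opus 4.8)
The plan is to sandwich $F(n,k,k-1)$ between the two quantities by exhibiting explicit extremal families for the lower bound and by a short analysis of the supports $S(\mathbf v)$, $\mathbf v\in\mathcal V$, for the upper bound.

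For the lower bound I would exhibit two admissible families. First, fix $R\in{[n]\choose k-1}$ and take, for each $x\in[n]\setminus R$, the $\{0,1\}$-indicator vector of $R\cup\{x\}$: two such vectors agree on $R$ and have disjoint ``extra'' coordinates, so their scalar product equals $|R|=k-1$, giving a family of size $n-k+1$. Second, fix $W\in{[n]\choose k+1}$ (possible since $n>k$) and take the indicator vectors of all $k$-subsets $W\setminus\{w\}$, $w\in W$: any two of these agree on an overlap of size $k-1$ and vanish elsewhere, so again the scalar product is $k-1$, giving a family of size $k+1$. Hence $F(n,k,k-1)\ge\max\{k+1,n-k+1\}$.

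For the upper bound, let $\mathcal V$ be any admissible family. Since $\langle\mathbf v,\mathbf w\rangle=\sum_i v_iw_i$ with each summand in $\{-1,0,1\}$ and equal to $0$ off $S(\mathbf v)\cap S(\mathbf w)$, we get $\langle\mathbf v,\mathbf w\rangle\le|S(\mathbf v)\cap S(\mathbf w)|$; and if $S(\mathbf v)=S(\mathbf w)$ with $\mathbf v\ne\mathbf w$ then some summand equals $-1$, so $\langle\mathbf v,\mathbf w\rangle\le k-2$. Together with the hypothesis $\langle\mathbf v,\mathbf w\rangle\ge k-1$ this forces distinct members of $\mathcal V$ to have distinct supports and any two of these supports to meet in exactly $k-1$ elements. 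Thus $|\mathcal V|$ equals the cardinality of a family $\mathcal S\subset{[n]\choose k}$ all of whose pairwise intersections have size $k-1$, and it remains to bound $|\mathcal S|$.

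The key step is the (elementary and well known) dichotomy for such an $\mathcal S$: either all members share a common $(k-1)$-element core $R$ --- in which case $|\mathcal S|\le|[n]\setminus R|=n-k+1$ --- or $\bigcup_{S\in\mathcal S}S$ has size exactly $k+1$, so $\mathcal S$ is contained in the set of $k$-subsets of a $(k+1)$-set and $|\mathcal S|\le{k+1\choose k}=k+1$. I would prove this by picking $A=R\cup\{a\}$ and $B=R\cup\{b\}$ in $\mathcal S$ with $R=A\cap B$: if every further member contains $R$ we are in the first alternative; otherwise a short computation from $|C\cap A|=|C\cap B|=k-1$ pins down any offending set as $C=(R\setminus\{r\})\cup\{a,b\}$ for some $r\in R$, and a second short computation then shows every member of $\mathcal S$ must be a $k$-subset of $W:=R\cup\{a,b\}$, giving the second alternative. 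This case analysis on $k$-sets is the only delicate point; everything else reduces to one-line estimates on scalar products and the two constructions. Combining the bounds yields $|\mathcal V|=|\mathcal S|\le\max\{k+1,n-k+1\}$, matching the lower bound and completing the proof.
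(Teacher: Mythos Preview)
Your proof is correct and follows essentially the same route as the paper: both observe that any coordinate contributing $-1$ to the scalar product forces $\langle\mathbf u,\mathbf v\rangle\le k-2$, thereby reducing the question to bounding a $(k-1)$-intersecting family of $k$-sets. The only difference is cosmetic --- the paper phrases the reduction as homogeneity of $\mathcal V$ and then cites the classification of such families (stars around a $(k-1)$-set or subfamilies of $\binom{[k+1]}{k}$) as known, whereas you supply that dichotomy by a short direct argument, making your version more self-contained but otherwise identical in spirit.
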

\begin{proof} First of all note that $u_iv_i = -1$ forces $\langle\mathbf u,\mathbf v\rangle\le k-2$. Thus $\mathcal V$ is homogeneous and the condition translates into $|S(\mathbf u)\cap S(\mathbf v)|\ge k-1$.

Since the only $(k-1)$-intersecting families of $k$-sets are  stars around a $(k-1)$-element set and subsets of ${[k+1]\choose k}$, we are done.
\end{proof}
Given a vector family $\mathcal V$, for any $i\in [n]$ we denote by $d_i$ the \textit{degree} of $i$, that is, the number of vectors from $\mathcal V$ that have a nonzero coordinate on position $i$.
\begin{prop}\label{prop4} For $n>k\ge 2$ the following holds: \\
(i) $F(3,2,0) =4$.\\
(ii) $F(n,k,k-2) = {n\choose k}$ if $n=k+2$ or $k\ge 3$ and $n=k+1$.\\
(iii) $F(n,k,k-2) = \max\bigl\{{n-k+2\choose 2}, k(n-k)+1\bigr\}$ for $n\ge k+3$.
\end{prop}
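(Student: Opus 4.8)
For part~(iii) I would exhibit three families and keep the largest. \emph{Core family:} fix a $(k-2)$-set $C$, a sign pattern on $C$, and an arbitrary sign for every coordinate outside $C$; for each pair $\{i,j\}\subseteq[n]\setminus C$ include the vector of $\mathcal L_k$ supported on $C\cup\{i,j\}$ with those signs. Any two of these agree on their entire common support, which contains $C$, so $\langle\mathbf v,\mathbf w\rangle=|S(\mathbf v)\cap S(\mathbf w)|\ge k-2$; this gives $\binom{n-k+2}{2}$ vectors. \emph{Near-pencil:} take the all-ones vector on $[k]$ together with, for all $i\in[k]$ and $j\in[n]\setminus[k]$, the all-ones vector on $([k]\setminus\{i\})\cup\{j\}$; again any two agree on the common support and meet in $\ge k-2$ coordinates, so this is $k(n-k)+1$ valid vectors. \emph{Clique:} the all-ones vectors supported on the $k$-subsets of $[k+2]$ (any two $k$-subsets of a $(k+2)$-set meet in $\ge k-2$ elements), giving $\binom{k+2}{2}$ vectors. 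For part~(ii), since $n\le k+2$ any two $k$-subsets of $[n]$ meet in $\ge k-2$ elements, so the all-ones vectors on $\binom{[n]}{k}$ form a valid family of size $\binom{n}{k}$; for part~(i) take $\{(1,1,0),(1,-1,0),(1,0,1),(1,0,-1)\}$.

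\textbf{Bookkeeping.} For $\mathbf v,\mathbf w\in\mathcal L_k$ put $m=|S(\mathbf v)\cap S(\mathbf w)|$ and let $b$ count the coordinates of $S(\mathbf v)\cap S(\mathbf w)$ on which $\mathbf v$ and $\mathbf w$ have opposite signs; then $\langle\mathbf v,\mathbf w\rangle=m-2b$, so $\langle\mathbf v,\mathbf w\rangle\ge k-2$ holds exactly when $m=k$ with $b\le 1$, or $m\in\{k-2,k-1\}$ with $b=0$. I would record four consequences: (1)~$\mathcal S:=\{S(\mathbf v):\mathbf v\in\mathcal V\}$ is a $(k-2)$-intersecting $k$-uniform family; (2)~if $S\ne S'$ are supports with $|S\cap S'|\le k-1$, then all vectors supported on $S$ or on $S'$ carry one common sign pattern on $S\cap S'$; (3)~$|\mathcal V(S)|\le 2$ for every $S$, the two vectors (if both present) differing in exactly one coordinate; (4)~if some coordinate $i$ occurs with both signs in $\mathcal V$, then the two witnessing vectors share a support $S\ni i$ and differ only at $i$, so by~(2) every support other than $S$ avoids $i$, and in particular at most two vectors of $\mathcal V$ use coordinate $i$.

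\textbf{Upper bounds.} Using~(4) I would homogenize: while some coordinate is mixed, delete it together with the at most two vectors supported there; the result is again a valid family on one fewer coordinate, and after finitely many steps one reaches a homogeneous family on a ground set of size $n'$, having discarded at most $2(n-n')$ vectors. A routine estimate shows that for $n\ge k+3$ and $k\ge 2$ the bound on the homogeneous part plus $2(n-n')$ never exceeds $\max\{\binom{n-k+2}{2},\,k(n-k)+1\}$, so it is enough to bound homogeneous $\mathcal V$. But then the signs are globally consistent, so $\langle\mathbf v,\mathbf w\rangle=|S(\mathbf v)\cap S(\mathbf w)|$ always, (3) forces one vector per support, and $|\mathcal V|=|\mathcal S|$ where $\mathcal S$ may be \emph{any} $(k-2)$-intersecting $k$-uniform family on $[n]$. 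Hence $F(n,k,k-2)$ equals the maximum size of such a family, which by the Complete Intersection Theorem is attained by the core family, the near-pencil, or the $k$-subsets of $[k+2]$, and one reads off the value for $n\ge k+3$. For part~(ii) the same reduction applies but $(k-2)$-intersection is automatic, so one only checks that a support carrying two vectors (mixed at some $i$) forces $\mathcal S\subseteq\{S\}\cup\binom{[n]\setminus\{i\}}{k}$ via~(4), whence $|\mathcal V|\le\binom{n}{k}$ as soon as $k\ge 3$ (and also when $k\ge 2$ if $n=k+2$). Part~(i) is the finite assertion that the largest clique of the ``$\langle\cdot,\cdot\rangle\ge 0$'' graph on $\mathcal L_2\subseteq\{0,\pm 1\}^3$ has four vertices.

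\textbf{The main obstacle.} The crux is the last step of~(iii): once the problem is reduced to a $(k-2)$-intersecting $k$-uniform family, one must pin down which of the three extremal configurations wins when $n$ is only slightly larger than $k$, since the clique of size $\binom{k+2}{2}$ competes with the core family and the near-pencil for small $n$ and has to be ruled out; this is exactly where the hypothesis $n\ge k+3$ does its work. The homogenization estimate for the non-homogeneous case, while elementary, is the other place where the various ranges of $n$ and $k$ must be balanced against one another.
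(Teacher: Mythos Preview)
Your outline follows the paper's proof closely: both arguments hinge on the observation that a coordinate $i$ carrying both signs forces $d_i=2$ with the two witnesses sharing a support $S$ and every other support avoiding $i$; both then pass to a homogeneous family and invoke the Complete Intersection Theorem for $(k-2)$-intersecting $k$-uniform families. The paper is terser---it simply asserts ``if $|\mathcal V|$ is maximal then it is homogeneous'' and for $n=k+1,k+2$ uses the cruder count $|\mathcal V|\le 2+F(n-1,k,k-2)$---while you try to make the reduction quantitative via iterated deletion.

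There is, however, a genuine gap in your ``routine estimate''. Take $k=3$, $n=6$, and suppose one coordinate is mixed; deleting it drops you to $n'=5$, where the homogeneous bound is $\binom{5}{3}=10$, so your estimate yields $10+2=12$, strictly larger than the target value $10$. The same overshoot occurs for any $k\ge 3$ with $n=k+3$, $n'=k+2$: you get $\binom{k+2}{2}+2$ against a target of $\max\{10,3k+1\}$. What your deletion throws away---and what the paper's terse argument implicitly uses---is that the surviving vectors must still satisfy $\langle\mathbf w,\mathbf u\rangle\ge k-2$ and $\langle\mathbf w,\mathbf v\rangle\ge k-2$ with the two deleted vectors, which forces every remaining support to contain at least $k-2$ elements of the fixed $(k-1)$-set $S\setminus\{i\}$. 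That extra rigidity is what actually closes the gap, and your bookkeeping item~(2) already contains it; you just need to carry it through the induction rather than appealing to an unconstrained $F(n',k,k-2)$.

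A smaller point: your claim that the hypothesis $n\ge k+3$ ``rules out'' the clique $\binom{k+2}{2}$ is not correct for $k\ge 4$. For $k=4$, $n=7$ the all-ones vectors on the $4$-subsets of $[6]$ form a valid family of size $15>\max\{10,13\}$, so the clique genuinely beats the other two candidates there. (This is in fact an issue with the stated formula in part~(iii), which both you and the paper inherit; the Complete Intersection Theorem really gives a three-term maximum.)
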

\begin{proof} The first part is very easy to verify. As for the other parts, assume that $u_i = -v_i$ for some $\mathbf u,\mathbf v\in \mathcal V$. Then $S(\mathbf u) = S(\mathbf v)$ and $\mathbf u$ and $\mathbf v$ must agree in the remaining coordinate positions. Thus, $d_i=2$ in this case. Since $F(k,k,k-2) =2, $ for $n=k+1$ a vertex of degree $2$ would force $|\mathcal V|\le 2+2 = 4.$ On the other hand, if we assume that $\mathcal V$ is homogeneous, then $|\mathcal V|\le {k+1\choose k}$ for $k\ge 3$, as desired. Since ${k+2\choose k}-{k+1\choose k}\ge 2$ for $k\ge 2$, the same argument works for $n=k+2, k\ge 2$ as well.

If $n>k+2$, the same argument as above implies that the family $\{S(\mathbf u):\mathbf u\in \mathcal V\}$ is $(k-2)$-intersecting. If $|\mathcal V|$ is maximal, then it is homogeneous. The bound then follows from the Complete Intersection Theorem (see Section \ref{sec3}).
\end{proof}

\subsection{Results}\label{sec22}

 Our main result concerning $\{0,\pm 1\}$-vectors is the following theorem, which determines $F(n,k,l)$ for all $k,l$ and sufficiently large $n$ and which shows the connection of $F(n,k,l)$ with the above stated theorems of Katona and Kleitman, as well as the problem of determining $g(n,k',l')$.

\begin{thm}\label{thm1} For any $k$ and $n\ge n_0(k)$ we have \\
 \begin{align*}1.\ \ \ F(n,k,l) =\ &{n-l\choose k-l} \ \ \ \ \ \ \ \ \ \ \ \ \text{for }\ 0\le l\le k.\\
 2. \ F(n,k,-l) =\ &f(k,l) {n\choose k} \ \ \ \ \ \ \ \ \ \ \ \ \ \ \ \ \ \ \ \ \  \text{for even }\ 0\le l\le k,\\
 3. \  F(n,k,-l) =\ &g\Big(n,k-\frac{l+1}2,\frac{l+1}2\Big)+f(k,l-1) {n\choose k} \ \ \text{for odd }\ 0\le l\le k.\end{align*}\end{thm}
Note that $f(k,0) = 1$ and so the values for $l=0$ in part 1 and $l=0$ in part 2 coincide. We remark that the statement of part 2 of the theorem for $l=k$ is obvious and for $l=k-1$ it was already derived in Proposition \ref{prop1}. Using Theorem~\ref{fkjcta}, we get the exact value of $F(n,k,-1)$. The value of $F(n,k,-l)$ for other odd $l$ is determined up to an additive term of order $O(n^{k-1})$ using Theorem~\ref{fkbar}.

Using the same technique, we may extend the result of parts 2 and 3 of Theorem \ref{thm1} in the following way:

\begin{thm}\label{thm2} Let $\mathcal V\subset \mathcal L_k$ be the set of vectors such  that for any $\mathbf v,\mathbf w\in \mathcal V$ we have $\langle\mathbf v,\mathbf w\rangle\ne -l-1$ for some $0\le l< k$. Then we have
 $$\max_{\mathcal V}|\mathcal V| = f(k,l) {n\choose k}+O(n^{k-1}).$$

\end{thm}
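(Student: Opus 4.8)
The plan is the following. The lower bound needs no argument: the extremal family in part~2 of Theorem~\ref{thm1} has $\langle\mathbf v,\mathbf w\rangle\ge -l$ for all of its members, so, since $l\ge 0$, it avoids the value $-l-1$ and already witnesses $\max_{\mathcal V}|\mathcal V|\ge f(k,l){n\choose k}$. For the upper bound I would re-run the proof of part~2 of Theorem~\ref{thm1}, checking that it survives --- at the cost of deleting $O(n^{k-1})$ vectors --- when its hypothesis ``$\langle\mathbf v,\mathbf w\rangle\ge -l$ for all pairs'' is relaxed to ``$\langle\mathbf v,\mathbf w\rangle\ne -l-1$ for all pairs''. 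A convenient way to package the reduction is to pass to $\mathcal V'=\{\mathbf v\in\mathcal V:\langle\mathbf v,\mathbf w\rangle\ge -l\text{ for every }\mathbf w\in\mathcal V\}$: all pairs inside $\mathcal V'$ have scalar product $\ge -l$, whence $|\mathcal V'|\le f(k,l){n\choose k}$ by Theorem~\ref{thm1}, and it then remains to bound by $O(n^{k-1})$ the number of $\mathbf v\in\mathcal V$ that participate in some pair of scalar product at most $-l-2$.

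The engine behind both the original argument and the adaptation is the identity $\langle\mathbf v,\mathbf w\rangle=|S(\mathbf v)\cap S(\mathbf w)|-2r$, where $r$ is the number of coordinates of $S(\mathbf v)\cap S(\mathbf w)$ on which the signs of $\mathbf v$ and $\mathbf w$ differ. Using it, together with $l<k$: for any $\mathbf v\in\mathcal L_k$ and any $(l+1)$-subset $T\subseteq S(\mathbf v)$, every vector $\mathbf w$ that equals $-\mathbf v$ on $T$, is supported on $T\cup U$ for some $(k-l-1)$-set $U\subseteq[n]\setminus S(\mathbf v)$, and is arbitrary on $U$, satisfies $\langle\mathbf v,\mathbf w\rangle=-l-1$; there are $\Theta(n^{k-l-1})$ such $\mathbf w$. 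The proof of part~2 of Theorem~\ref{thm1} bounds $|\mathcal V|$ by producing, for vectors $\mathbf v$ of a suitable type, many partners that a family with all pairwise scalar products $\ge -l$ could not contain; in the present setting one substitutes for those partners ones of the form just described, which realize precisely the value $-l-1$ that is forbidden here. This substitution is available for every $\mathbf v$ whose support avoids a bounded set of ``exceptional'' coordinates produced by the initial reduction in the proof of Theorem~\ref{thm1}, and only $O(n^{k-1})$ vectors of $\mathcal L_k$ have support meeting a fixed bounded set; discarding those recovers the stated estimate.

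The step I expect to be the main obstacle is the accounting: one must verify that turning the inequality-type obstructions of the Theorem~\ref{thm1} argument into exact-value ($-l-1$) obstructions really costs only $O(n^{k-1})$ vectors and not more, which forces one to reproduce the normalization/structure part of that proof with care rather than cite it as a black box. A lesser nuisance is the degenerate range $l\in\{k-2,k-1\}$, where $k-l-1\in\{0,1\}$ and the supply $\Theta(n^{k-l-1})$ of forbidden partners is too thin to drive the argument; this can be treated separately by hand, in the spirit of Propositions~\ref{prop3} and~\ref{prop4}.
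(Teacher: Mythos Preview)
Your high-level plan---strip off an $O(n^{k-1})$-sized set of ``bad'' vectors and observe that the rest has all pairwise scalar products $\ge -l$---is exactly what the paper does; it defines $\mathcal V_b$ (a superset of your $\mathcal V\setminus\mathcal V'$), bounds $|\mathcal V_b|=O(n^{k-1})$ by one Claim, and then applies Kleitman support-by-support to $\mathcal V_g$ (equivalently, Theorem~\ref{thm1} to your $\mathcal V'$) for the main term.

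Where your sketch goes wrong is the mechanism for bounding the bad vectors. Exhibiting, for a fixed $\mathbf v$, $\Theta(n^{k-l-1})$ vectors $\mathbf w$ with $\langle\mathbf v,\mathbf w\rangle=-l-1$ does not bound the number of such $\mathbf v$; nothing in that paragraph converts ``many forbidden partners per $\mathbf v$'' into ``few $\mathbf v$''. The paper's argument is different and cleaner: if $\mathbf v,\mathbf w\in\mathcal V$ have $\langle\mathbf v,\mathbf w\rangle\le -l-2$, they disagree on at least $l+2$ coordinates, so some $(l+1)$-set $I$ satisfies $\mathbf w_{|I}=-\mathbf v_{|I}$, i.e.\ both $\mathcal V(I,\mathbf v_{|I})$ and $\mathcal V(I,\bar{\mathbf v}_{|I})$ are nonempty. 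For any such ``bad'' pair $(I,\pm)$, the two families of supports on $[n]\setminus I$ must be cross-intersecting---otherwise two members of $\mathcal V$ would have scalar product \emph{exactly} $-l-1$---and Hilton--Milner gives $|\mathcal V(I,\mathbf v_{|I})|+|\mathcal V(I,\bar{\mathbf v}_{|I})|=O(n^{k-l-2})$. Summing over the $O(n^{l+1})$ choices of $(I,\pm)$ yields $|\mathcal V_b|=O(n^{k-1})$. Note that this Claim already uses \emph{only} the hypothesis $\langle\cdot,\cdot\rangle\ne -l-1$; no ``substitution'' of partners is needed, and the paper in fact proves the upper bounds for Theorem~\ref{thm1} part~2 and Theorem~\ref{thm2} via the same Claim.

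Your worry about a degenerate range is also unnecessary: for $l=k-1$ no bad $(l+1)$-set can occur (antipodal vectors on a $k$-set have scalar product $-k=-l-1$, forbidden), and for $l=k-2$ the cross-intersecting families of $1$-element sets have size $\le 1$, so each bad $I$ contributes $O(1)$ and the $O(n^{k-1})$ bound still holds.
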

\bigskip

For $k = 3$ the values not covered by Propositions \ref{prop1}, \ref{prop3}, \ref{prop4} and Theorem \ref{thm1} are $F(n,3,0)$ and $F(n,3,-1)$ for $n$ not too large. We determine $F(n,3,0)$ in the following theorem, but first we need some preparation.

Let us use the notation $(a,b,c)$ for a set $\{a,b,c\}$ if we know that $a<b<c$. Also for $(a,b,c)\subset [n]$ let $\mathbf u(a,b,c) = (u_1,\ldots, u_n)$ with $u_i = 1$ for $i \in \{a,b,c\}$ and $u_i = 0$ otherwise. Further let $\mathbf v(a,b,c) = (v_1,\ldots, v_n)$ be the vector with $v_a = v_b = 1$, $v_c = -1$ and $v_i = 0$ otherwise.

Let us show two lower bounds for $F(n,3,0)$. Taking all non-negative vectors gives
$$F(n,3,0) \ge {n\choose 3}.$$
The other one is based on the following family:
$$\mathcal V(n) = \{\mathbf u(a,b,c),\mathbf v(a,b,c): |(a,b,c)\cap[3]|\ge 2\}.$$
Note that $\langle\mathbf v,\mathbf v'\rangle\ge 0$ for all $\mathbf v,\mathbf v'\in \mathcal V(n)$ and the first few values of $|\mathcal V(n)| $ are $|\mathcal V(3)| = 2, |\mathcal V(4)| = 8, |\mathcal V(5)| = 14, |\mathcal V(6)| = 20$.

\begin{thm}\label{thm3} We have \\
(1) $F(n,3,0) = |\mathcal V(n)|$ for $n = 3,4,5$.\\
(2) $F(6,3,0) = 21.$\\
(3) $F(n,3,0) = {n\choose 3}$ for $n\ge 7$.
\end{thm}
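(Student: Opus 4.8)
The plan is to treat the three cases by increasingly combinatorial arguments, working with the ``support family'' $\mathcal S=\{S(\mathbf v):\mathbf v\in\mathcal V\}$ and the fibers $\mathcal V(S)$. Since $k=3$, a triple $S$ carries at most $4$ vectors with positive self-product: the all-positive one, and the three with a single $-1$. The scalar-product condition $\langle\mathbf v,\mathbf w\rangle\ge 0$ with $|S(\mathbf v)\cap S(\mathbf w)|\le 3$ forces tight local structure: if $|S(\mathbf v)\cap S(\mathbf w)|=1$ the two nonzero coordinates at the common index must agree in sign; if the intersection has size $2$, the signs on those two indices must match in at least one of them; and if $S(\mathbf v)=S(\mathbf w)$ they may differ in at most one coordinate, so $|\mathcal V(S)|\le 2$ unless all four sign patterns coincide on pairs — in fact one checks $\mathcal V(S)\le 3$ is impossible and $|\mathcal V(S)|\le 2$ whenever $\mathcal V$ contains a vector supported on a triple meeting $S$ in exactly two elements with a sign conflict. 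These local lemmas are the workhorses.

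For parts (1) and (2), the small ground set lets me do a near-exhaustive analysis. First I would observe that if $\mathcal V$ is homogeneous (up to flipping coordinates globally, i.e. up to an isomorphism that negates some coordinate positions consistently) then $\langle\mathbf v,\mathbf w\rangle\ge 0$ reduces to $|S(\mathbf v)\cap S(\mathbf w)|\ge 1$, an intersecting family of triples, of size at most $\binom{n-1}{2}$, and one compares this with $|\mathcal V(n)|$ and $\binom n3$. The nonhomogeneous case is where $\mathcal V(n)$ wins for $n\le 6$: here some index $i$ has vectors of both signs, which by the local lemmas severely restricts every vector whose support contains $i$ or meets the relevant supports, collapsing the count. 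For $n=3,4,5$ a short case split on how many ``bad'' indices occur pins $|\mathcal V|$ to $|\mathcal V(n)|$; for $n=6$ the extremal configuration is $\mathcal V(6)$ together with one extra all-positive triple (or the analogous near-extremal family), giving $20+1=21$, and I would verify no configuration reaches $22$ by bounding $\sum_S|\mathcal V(S)|$ via the degree sequence $d_i$ and the constraint $\sum_i d_i=3|\mathcal V|$.

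For part (3), $n\ge 7$, the claim is that the all-nonnegative family is optimal, so I must show any $\mathcal V$ with a genuinely negative coordinate has $|\mathcal V|\le\binom n3$. Suppose index $n$ (say) appears with a $-1$ in some $\mathbf v_0$. By the local lemmas, every other vector $\mathbf w$ with $|S(\mathbf w)\cap S(\mathbf v_0)|\le 2$ is constrained; the key is a counting/shifting argument showing that the ``loss'' incurred on triples that conflict with $\mathbf v_0$ outweighs the ``gain'' of at most one extra vector per triple. Concretely, I would partition the triples of $[n]$ by their intersection with $S(\mathbf v_0)$ and show $\sum_S|\mathcal V(S)|\le \binom n3$ by a local exchange: replace $\mathbf v_0$ and all its sign-conflicting partners by the corresponding all-positive vectors, check this strictly does not decrease $|\mathcal V|$ while reducing the number of negative coordinates, and iterate until $\mathcal V$ is homogeneous; then the homogeneous bound $\binom{n-1}{2}<\binom n3$ (for $n\ge 7$) — or rather the observation that a homogeneous $\mathcal V$ with $\langle\cdot,\cdot\rangle\ge0$ need only be intersecting and hence $\le\binom{n-1}{2}\le\binom n3$, but the all-positive family itself is not intersecting yet still valid — forces me to be careful: the right statement is that the \emph{maximum} homogeneous family is the all-positive $\mathcal L_3^+$ of size $\binom n3$, and the exchange argument never exceeds it.

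The main obstacle will be the exchange step in part (3): showing rigorously that locally replacing conflicting vectors by positive ones never loses a vector and strictly decreases the number of negative entries requires carefully checking that the replacement vectors are not already present and do not create new scalar-product violations with far-away vectors — the latter is fine because an all-positive vector has nonnegative product with everything supported consistently, but one must confirm the intermediate families stay inside $\mathcal L_3$ and keep the constraint. Parts (1) and (2) are mechanical once the local lemmas are in place; (3) is where the idea has to be packaged as a clean monovariant (number of negative coordinates, or $\sum_i d_i$ restricted to signs) that the exchange provably improves.
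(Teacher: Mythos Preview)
Your proposal misses the main tool the paper uses: shifting. The paper first replaces $\mathcal V$ by a shifted family (both $(i,j)$-shifts and coordinate up-shifts $S_i$), which preserves the minimum scalar product and the size. In a shifted family one gets for free that every support $S=(a,b,c)$ carries at most the two vectors $\mathbf u(a,b,c)$ and $\mathbf v(a,b,c)$; this immediately gives $F(3,3,0)\le 2$ and $F(4,3,0)\le 8$, and sets up a clean induction on $n$ by looking at the link of the coordinate $n$ (the pairs $(a,b)$ with $\mathbf u(a,b,n)\in\mathcal W$, respectively $\mathbf v(a,b,n)\in\mathcal W$, form cross-intersecting families in $\binom{[n-1]}{2}$). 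All three parts are handled by the same induction; the $n=6$ case is finished by a short bipartite matching between $\binom{[2,6]}{3}\setminus\{(2,3,4)\}$ and $\binom{[2,6]}{2}$, not by a degree-count.

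Your exchange argument for part~(3) has a genuine gap. Replacing a single vector $\mathbf v_0$ by its all-positive version $\mathbf u(S(\mathbf v_0))$ can create \emph{new} violations: if some $\mathbf w\in\mathcal V$ has $w_j=-1$ at a coordinate $j\in S(\mathbf v_0)$ where $\mathbf v_0$ also had $-1$, then $\langle\mathbf v_0,\mathbf w\rangle$ may have been nonnegative while $\langle\mathbf u(S(\mathbf v_0)),\mathbf w\rangle<0$. Your fix (``replace all sign-conflicting partners too'') cascades, and you have not shown it terminates without losing vectors to collisions. The correct version of this idea is exactly the up-shift $S_i$ applied to the whole family at one coordinate simultaneously; that \emph{does} preserve the minimum scalar product and size, but it is a statement about shifting, not about replacing individual vectors by all-positive ones. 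Also, your opening remark that ``a triple $S$ carries at most $4$ vectors'' is off (there are $8$ in $\mathcal L_3(S)$; the correct bound $|\mathcal V(S)|\le 2$ comes from Kleitman's theorem or, more simply, from shiftedness), and the claim that a homogeneous family must be intersecting is false: for $l=0$ all-positive vectors satisfy the constraint with no intersection condition at all, which is why $\binom{n}{3}$ is attainable. Finally, do not expect $n=6$ to be ``mechanical'': the extremal family there is \emph{not} $\mathcal V(6)$ plus one extra vector but a genuinely different configuration $\mathcal U_6$ of size $21$, and ruling out $22$ needs a real argument.
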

As we will see, the proof of $(2)$ is the most difficult.

\section{Cross-intersecting families}\label{sec3}
We say that two families $\mathcal A,\mathcal B\subset 2^{[n]}$ are \textit{$s$-cross-intersecting}, if for any $A\in\mathcal A, B\in \mathcal B$ we have $|A\cap B|\ge s$.
In the proof of Theorem \ref{thm1} we need to estimate the sum of sizes of two $t$-intersecting families that are $s$-cross-intersecting. For the proof some relatively crude  bounds are sufficient but we believe that this problem is interesting in its own right. As a matter of fact the case of non-uniform families was solved by Sali \cite{S} (cf. \cite{F2} for an extension with a simpler proof).

To state our results for the $k$-uniform case let us make some definitions.

\begin{opr} For $k\ge s>t\ge 1$ and $k\ge 2s-t$ define  the Frankl-family $\mathcal A_i(k,s,t)$ by $$\mathcal A_i(k,s,t) = \{A\subset{[k]\choose s}: |A\cap [t+2i]|\ge t+i\},\ \ \ 0\le i\le s-t.$$
\end{opr}
Note that for $A,A'\in \mathcal A_i(k,s,t)$ one has $|A\cap A'\cap [t+2i]|\ge t,$ in particular, $\mathcal A_i(k,s,t)$ is $t$-intersecting. Also, for $i<s-t$ every vertex of $[k]$ is covered by some $A\in \mathcal A_i(k,s,t)$.

The following result was conjectured by Frankl \cite{F1} and nearly 20 years later proved by Ahlswede and Khachatrian.

\begin{thm*}[Complete Intersection Theorem (\cite{AK})]\label{thmak} Suppose that $\mathcal A\subset{[k]\choose s}$ is $t$-intersecting, $k\ge 2s-t$. Then
\begin{equation}\label{eqstar} |\mathcal A|\le \max_{0\le i\le s-t}|\mathcal A_i(k,s,t)|=:m(k,s,t)\ \ \ \ \ \text{holds.}
\end{equation}
Moreover, unless $k = 2s, t=1$ or $\mathcal A$ is isomorphic to $\mathcal A_i(k,s,t)$, the inequality is strict.
\end{thm*}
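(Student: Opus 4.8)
This is the Ahlswede--Khachatrian theorem, and I do not know a substantially shorter route than their original argument, so my plan is to follow its skeleton: a shifting reduction followed by an inductive ``pushing--pulling'' analysis of the resulting structured families. (One should keep in mind that the hypothesis $k\ge 2s-t$ is exactly the range in which the bound is not vacuous: for $k<2s-t$ any two $s$-subsets of $[k]$ already meet in more than $t$ points.) \emph{Step 1, reduce to left-compressed families.} For $1\le i<j\le k$ let $S_{ij}$ be the usual compression on $\binom{[k]}{s}$, which replaces every $A$ with $j\in A$, $i\notin A$ by $(A\setminus\{j\})\cup\{i\}$ unless that set is already in the family, and fixes all other sets. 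One verifies in a line that $S_{ij}$ leaves $|\mathcal A|$ unchanged, sends a $t$-intersecting family to a $t$-intersecting family, and that iterating the $S_{ij}$ terminates; hence it is enough to bound a \emph{left-compressed} $\mathcal A$, while recording when strictness holds. Each $\mathcal A_i(k,s,t)$ is itself left-compressed, which is why the ``up to isomorphism'' uniqueness statement is not spoiled by this reduction.

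\emph{Step 2, the pushing--pulling machinery.} For a left-compressed $t$-intersecting $\mathcal A$ one first writes down, for all $A,A'\in\mathcal A$ and all $m$, the forced lower bound on $|A\cap A'\cap[m]|$; this encodes $\mathcal A$ through the inclusion-minimal sets it ``generates'' inside the initial segments $[t],[t+2],[t+4],\ldots$, and in particular shows that if $\mathcal A$ is isomorphic to none of the $\mathcal A_i$ then its generating profile is not canonical. The core device is then a pair of local operators --- a ``push'' on one coordinate followed by a ``pull'' on another, the pull deliberately undoing part of the left-compression --- whose composition, applied to such an $\mathcal A$, returns a left-compressed $t$-intersecting family of size at least $|\mathcal A|$ with a strictly ``more canonical'' profile. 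Iterating this, together with an induction on the associated complexity measure (or, equivalently, an induction on $k$ through vertex links, interleaved with the push/pull steps), one concludes that every size-optimal left-compressed $t$-intersecting family is isomorphic to one of the $\mathcal A_i(k,s,t)$.

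\emph{Step 3, conclusion, uniqueness, and the main difficulty.} Given Step 2 the inequality $|\mathcal A|\le\max_{0\le i\le s-t}|\mathcal A_i(k,s,t)|$ is immediate, and uniqueness follows once one checks that every push/pull step is \emph{strictly} size-increasing outside a single degenerate configuration, which turns out to be $k=2s$, $t=1$ --- the case in which $\mathcal A_0$ and $\mathcal A_{s-t}$ have equal size, so no unique optimum can be expected. As a partial shortcut one can note that once $k$ reaches the threshold $(t+1)(s-t+1)$, where $\mathcal A_0$ is the maximizer, the bound already follows from Wilson's eigenvalue (ratio) bound for $t$-intersecting families, so it is only below that threshold that the machinery of Step 2 is genuinely needed. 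That step is where essentially all the work lies: arranging the push/pull operators so that they \emph{simultaneously} keep $\mathcal A$ $t$-intersecting, never drop a member, and strictly decrease the chosen complexity measure --- uniformly over every configuration of a left-compressed family --- is the delicate combinatorial core of Ahlswede and Khachatrian's argument, and I would expect almost all of the effort to be spent there.
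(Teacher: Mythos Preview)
The paper does not prove this statement at all: the Complete Intersection Theorem is quoted from Ahlswede and Khachatrian \cite{AK} as a known result and is used as a black box in the proofs of Proposition~\ref{prop4} and Theorem~\ref{thm4}. There is therefore no ``paper's own proof'' to compare your proposal against.

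That said, as a sketch of the Ahlswede--Khachatrian argument your outline is broadly accurate (left-compression followed by their pushing--pulling technique and the associated complexity measure), and the remark about Wilson's bound handling the range $k\ge (t+1)(s-t+1)$ is correct. One small inaccuracy in your discussion of the exceptional case $k=2s$, $t=1$: the reason uniqueness fails there is not merely that two of the $\mathcal A_i$ tie, but that \emph{every} maximal intersecting family of $s$-subsets of $[2s]$ (obtained by picking one set from each complementary pair) is extremal, and most of these are isomorphic to no $\mathcal A_i$ at all.
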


\begin{opr} For $k\ge 2s-t$, $0\le i<s-t$ define $$\mathcal M_i(k,s,t) = \{A\subset [k]: |A|\ge s, |A\cap [t+2i]|\ge t+i\}\cup \{A\subset [k]:|A| \ge k-s+t\}.$$
\end{opr}
Note that $\{A\in {[k]\choose s}:A\in \mathcal M_i(k,s,t)\} = \mathcal A_i(k,s,t)$ for $k>2s-t$ and that $\mathcal M_i(k,s,t)$ is $t$-intersecting.

For fixed $k,s,t$ and $0\le i<s-t$ let us define the pair $\mathcal A_i = \{A\in {[n]\choose k}: A\cap [k]\in \mathcal M_i(k,s,t)\}$, $\mathcal B_i = \{[k]\}$. Then these non-empty $t$-intersecting families are $s$-cross-intersecting.

Note that $\mathcal A_{s-t}(k,s,t) = {[2s-t]\choose s}.$ For $i=s-t$ we define
\begin{align*}\mathcal A_{s-t} = &\{A\in {[n]\choose k}: |A\cap [2s-t]|\ge s\}\ \ \ \ \text{and}\\ \mathcal B_{s-t} = &\{B\in{[n]\choose k}: [2s-t]\subset B\}.
\end{align*}
Again the non-empty $t$-intersecting families $\mathcal A_{s-t}$ and $\mathcal B_{s-t}$ are $s$-cross-intersecting.\\

With this terminology we prove

\begin{thm}\label{thm4} Let $k>s>t\ge 1$ be integers, $k\ge 2s-t$. Suppose that $\mathcal A,\mathcal B\subset{[n]\choose k}$ are non-empty $t$-intersecting families which are cross $s$-intersecting. Then for $n\ge n_0(s,t,k)$ we have
\begin{equation}\label{eqthm4} \max_{\mathcal A,\mathcal B}\big\{|\mathcal A|+|\mathcal B|\big\}= \max_{0\le i\le s-t} \big\{|\mathcal A_i|+|\mathcal B_i|\big\}.\end{equation}
Moreover, unless $k = 2s, t=1$ or $\mathcal A,\mathcal B$ are isomorphic to $\mathcal A_i,\mathcal B_i$, the equality above transforms into a strict inequality.
\end{thm}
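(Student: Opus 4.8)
The plan is to prove the inequality and then read off the equality cases. Abbreviate $M:=\max_{0\le i\le s-t}\{|\mathcal A_i|+|\mathcal B_i|\}$; a direct computation gives $M=m(k,s,t)\binom{n-k}{k-s}+O(n^{k-s-1})$. Assume without loss of generality $|\mathcal A|\ge|\mathcal B|$. If $|\mathcal A|\le M/3$ then $|\mathcal A|+|\mathcal B|\le 2M/3<M$ and there is nothing to prove, so assume $|\mathcal A|>M/3$. Fix $B_0\in\mathcal B$. The first step is to upgrade the cross-condition on $\mathcal B$: if $B,B'\in\mathcal B$ had $|B\cap B'|<s$, then, as $|B\cap B'|\ge t$, every $A\in\mathcal A$ would meet the fixed set $B\cup B'$ (of size at most $2k-t$) in at least $s+1$ points, so $|\mathcal A|=O(n^{k-s-1})$ --- a contradiction. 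Hence $\mathcal B$ is $s$-intersecting.

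The second step is a dichotomy for $\mathcal B$ provided by a Hilton--Milner type stability form of the Complete Intersection Theorem: for $n$ large, an $s$-intersecting family $\mathcal G\subseteq\binom{[n]}{k}$ is either contained in a full star $\{G:T\subseteq G\}$ for some $s$-element set $T$, or satisfies $|\mathcal G|=O(n^{k-s-1})$. Suppose the first alternative holds for $\mathcal B$, with $|\mathcal B|$ not $O(n^{k-s-1})$, and fix the corresponding $T$. Every $A\in\mathcal A$ must meet every member of $\mathcal B$ in at least $s$ points; if $T\not\subseteq A$ for some $A\in\mathcal A$, then each $B\in\mathcal B$ (which contains $T$) satisfies $(B\setminus T)\cap(A\setminus T)\ne\emptyset$, confining $\mathcal B$ to $\{B:T\subseteq B,\ (B\setminus T)\cap(A\setminus T)\ne\emptyset\}$, of size $O(n^{k-s-1})$ --- contradiction. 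So $T\subseteq A$ for every $A\in\mathcal A$ as well, whence $|\mathcal A|+|\mathcal B|\le 2\binom{n-s}{k-s}<M$ (using $m(k,s,t)\ge|\mathcal A_{s-t}(k,s,t)|=\binom{2s-t}{s}\ge 3$, as $s\ge 2$). In this branch the inequality is strict and there is no equality case.

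There remains the main case $|\mathcal B|=O(n^{k-s-1})$. Relabel $B_0=[k]$, so that $\mathcal A$ is a $t$-intersecting family of $k$-sets each meeting $[k]$ in at least $s$ points. All left-shifts $S_{ij}$ preserve $t$-intersection, preserve the condition $|A\cap[k]|\ge s$ (since $[k]$ is an initial segment), and leave $|\mathcal A|$ unchanged, so we may assume $\mathcal A$ is shifted. For $G\subseteq[k]$ with $|G|\ge s$ put $d(G):=|\{A\in\mathcal A:A\cap[k]=G\}|\le\binom{n-k}{k-|G|}$; the sets $G$ with $|G|\ge s+1$ contribute only $O(n^{k-s-1})$ to $|\mathcal A|$. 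For size-$s$ sets $G,G'$ with $d(G),d(G')\ge n^{-1/2}\binom{n-k}{k-s}$ one must have $|G\cap G'|\ge t$: otherwise the ``tail families'' $\{A\setminus[k]:A\in\mathcal A,\ A\cap[k]=G\}$ and its analogue for $G'$ would be cross-intersecting $(k-s)$-uniform families on $[n]\setminus[k]$, each of size exceeding $\binom{n-k-1}{k-s-1}$, which is impossible for $n$ large. Hence the family of these ``heavy'' size-$s$ traces is $t$-intersecting and, by the Complete Intersection Theorem (with ground set $[k]$, which is legitimate since $k\ge 2s-t$), has at most $m(k,s,t)$ members, so $|\mathcal A|\le m(k,s,t)\binom{n-k}{k-s}+o(n^{k-s})$; this already yields the bound up to a lower-order error. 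To get the exact statement one bootstraps: near-optimality forces the heavy traces to form (a copy of a subfamily of) an extremal family $\mathcal A_{i_0}(k,s,t)$, which in turn forces $\mathcal A$ to be a subfamily of the corresponding $\mathcal A_{i_0}$, so $|\mathcal A|\le|\mathcal A_{i_0}|$. Combining with the bound on $|\mathcal B|$: if $i_0<s-t$, then the sets of $\mathcal A_{i_0}(k,s,t)$ cover all of $[k]$, so every $B\in\mathcal B$ contains $[k]$, giving $\mathcal B=\{[k]\}$ and $|\mathcal A|+|\mathcal B|\le|\mathcal A_{i_0}|+1\le M$; if $i_0=s-t$, then $\mathcal A\subseteq\{A:|A\cap[2s-t]|\ge s\}$, and a short count (using that each $s$-subset of $[2s-t]$ still carries many tails) forces $\mathcal B\subseteq\{B:[2s-t]\subseteq B\}$, so $|\mathcal B|\le|\mathcal B_{s-t}|$ and again $|\mathcal A|+|\mathcal B|\le M$. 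The equality cases, and the exceptional pair $k=2s,\ t=1$, are inherited from those of the Complete Intersection Theorem invoked above.

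The step I expect to be the main obstacle is making the last paragraph genuinely exact rather than asymptotic: one must control all lower-order contributions --- the traces of size larger than $s$, the ``light'' size-$s$ traces, and the gap between the crude bound $|\mathcal B|=O(n^{k-s-1})$ and the true extremal value $|\mathcal B_{s-t}|=\Theta(n^{k-2s+t})$ when $s>t+1$ --- so that the bootstrapping pins $\mathcal A$ and $\mathcal B$ down to subfamilies of $\mathcal A_i$ and $\mathcal B_i$ and yields the clean equality characterization. This calls for a quantitative stability version of the Complete Intersection Theorem together with somewhat delicate counting; the remaining steps are routine.
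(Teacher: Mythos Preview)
Your outline gets the asymptotic bound correctly, and the opening move---showing that $\mathcal B$ must be $s$-intersecting once $|\mathcal A|$ is of order $n^{k-s}$, via $|A\cap(B\cup B')|\ge 2s-|B\cap B'|\ge s+1$---is a nice observation not in the paper. The heavy-trace argument also works: if two size-$s$ traces $G,G'$ with $|G\cap G'|<t$ both had tail families of size exceeding $\binom{n-k-1}{k-s-1}$, Pyber's cross-intersecting inequality $|\mathcal T_G|\,|\mathcal T_{G'}|\le\binom{n-k-1}{k-s-1}^2$ gives the contradiction. So the reduction to the Complete Intersection Theorem is sound.

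However, the paper takes a different and cleaner route that avoids exactly the obstacle you flag. Instead of a threshold-based heavy/light dichotomy, it introduces an \emph{exact} combinatorial notion: call $T\in\binom{[n]}{s}$ a \emph{kernel} for $\mathcal A$ if $\mathcal A(T)=\{A\setminus T:T\subset A\in\mathcal A\}$ contains $k+1$ pairwise disjoint sets. A one-line sunflower argument then shows that every $B\in\mathcal B$ must contain every kernel of $\mathcal A$, and that any two kernels (of $\mathcal A$, or one of each family) intersect in at least $t$ elements. The family $\mathcal J\subset\binom{[k]}{s}$ of kernels is thus $t$-intersecting, and the Complete Intersection Theorem gives $|\mathcal J|\le m(k,s,t)$; meanwhile a non-kernel $T$ satisfies $|\mathcal A(T)|\le k\binom{n-s-1}{k-s-1}$. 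This yields $|\mathcal A|+|\mathcal B|\le |\mathcal J|\binom{n-k}{k-s}+O(n^{k-s-1})$ directly, with no light traces to mop up and no quantitative stability needed: if $|\mathcal J|<m(k,s,t)$ the bound is already beaten by the construction, so $\mathcal J$ is an extremal $\mathcal A_i(k,s,t)$, and the remaining structural claims (that $\{A\cap[k]:A\in\mathcal A\}\subset\mathcal M_i(k,s,t)$ and that $|\mathcal B|$ is pinned down) follow by short explicit arguments.

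So your diagnosis of the difficulty is accurate, but the fix is not a stability version of the Complete Intersection Theorem; it is to replace ``heavy trace'' by ``kernel''. The sunflower definition is rigid enough that the $t$-intersection of kernels and the containment $T\subset B$ are exact rather than approximate, which is what lets the paper read off the equality cases without bootstrapping. One further caution: you shift $\mathcal A$ alone after fixing $B_0=[k]$, which is fine for bounding $|\mathcal A|$ but decouples $\mathcal A$ from the rest of $\mathcal B$; to recover $\mathcal B\subset\mathcal B_{i_0}$ at the end you would need to go back to the unshifted pair. The paper avoids this by shifting both families simultaneously at the outset.
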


We note that the conclusive result for $t=0$ was obtained by the authors in the paper \cite{FK4}, and therefore we do not consider this case here.\\

\textbf{Remark.} The assumption $k\ge 2s-t$ is in fact not restrictive, since if  $A\in\mathcal A$ and $B\in\mathcal B$ then any two sets from $\mathcal A$ intersect in at least $2s-k$ elements inside $B$ (and analogously for $\mathcal B$ and $A$). If  $2s-k\ge t$, then the $t$-intersecting condition is implied by the $s$-cross-intersecting condition, and it reduces to the case studied in \cite{FK4}.

We also note that the problem makes sense only in case $s>t$. Otherwise, we may just take both $\mathcal A$ and $\mathcal B$ to be the same $t$-intersecting family of maximal cardinality. Thus, in case $t\ge s$ the problem reduces to a trivial application of the Complete Intersection Theorem.

\section{Shifting for vectors}

In the proof of Theorem \ref{thm1} we use two types of shifting. The first one pushes bigger coordinates to the left. For a given  pair of indices $i<j\in [n]$ and a vector $\mathbf v = (v_1,\ldots, v_n)\in \{0,\pm 1\}^n$ we define an $(i,j)$-shift $S_{i,j}(\mathbf v)$ of $\mathbf v$ in the following way. If $v_i\ge v_j$, then $S_{i,j}(\mathbf v) = \mathbf v$. If $v_i<v_j$, then $S_{i,j}(\mathbf v) := (v_1,\ldots, v_{i-1},v_j,v_{i+1},\ldots,v_{j-1},v_i,v_{j+1},\ldots, v_n),$ that is, it is obtained from $\mathbf v$ by interchanging its $i$-th and $j$-th coordinate.

Next, we define an $(i,j)$-shift $S_{i,j}(\mathcal W)$ for a family of vectors $\mathcal W\subset \{0,\pm 1\}^n$:

$$S_{i,j}(\mathcal W) := \{S_{i,j}(\mathbf v): \mathbf v\in \mathcal W\}\cup \{\mathbf v: \mathbf v,S_{i,j}(\mathbf v)\in \mathcal W\}.$$

The second is the \textit{up-shift}:
$S_i(v_1,\ldots,v_n)= (v_1,\ldots,v_n)$ if $v_i = 0$ or $1$ and $S_i(v_1,\ldots,v_n) = (v_1,\ldots,v_{i-1},1,v_{i+1},\ldots,v_n)$ if $v_i  = -1$. The shift $S_i(\mathcal W)$ is defined similarly to $S_{i,j}(\mathcal W)$:
$$S_{i}(\mathcal W) := \{S_{i}(\mathbf v): \mathbf v\in \mathcal W\}\cup \{\mathbf v: \mathbf v,S_{i}(\mathbf v)\in \mathcal W\}.$$

We call a system $\mathcal W$ \textit{shifted}, if $\mathcal W = S_{i,j}(\mathcal W)$ for all $i<j\in [n]$ and $\mathcal W = S_i(\mathcal W)$ for all $i\in[n]$. Any  system of vectors may be made shifted by means of a finite number of $(i,j)$-shifts and $i$-up-shifts. Moreover, it was shown in  \cite{FK} and \cite{Kl} (and is easy to verify directly) that $(i,j)$-shifts and up-shifts do not decrease the minimal scalar product in $\mathcal W$.

\section{Proof of Theorems \ref{thm1} and \ref{thm2}}\label{sec4}


\subsubsection*{Part 1 of Theorem \ref{thm1}}
First we show that the left hand side is at least the right hand side.
Take the family of vectors $\mathcal V = \{\mathbf v = (v_1,\ldots,v_n): v_1 = \ldots = v_{l} =1, v_i\in\{0,1\}\}$. It clearly satisfies the condition $\langle\mathbf v,\mathbf w\rangle \ge l$ for any $\mathbf v, \mathbf w\in \mathcal V$ and has cardinality  ${n-l\choose k-l}$.\\

We proceed to the upper bound. Take any family $\mathcal V$ of $\{0,\pm 1\}$-vectors having minimal scalar product at least $l$ and define its subfamilies  $\mathcal V_i(\epsilon) =\{\mathbf w\in\mathcal V:w_i = \epsilon\}$ for $\epsilon =\pm 1$.

\begin{cla}\label{cl10}
If for some $i$ both $\mathcal V_i(-1)$ and $\mathcal V_i(1)$ are nonempty, then
$$|\mathcal V_i(1)|+|\mathcal V_i(-1)| \le 2^{k}{k-1\choose l+1}{n-l-2\choose k-l-2} = O(n^{k-l-2}).$$
\end{cla}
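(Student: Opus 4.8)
The plan is to fix an index $i$ for which both $\mathcal V_i(1)$ and $\mathcal V_i(-1)$ are nonempty, fix a witness vector $\mathbf w^0\in\mathcal V_i(-1)$, and show that every $\mathbf v\in\mathcal V_i(1)$ is forced to use up many of its nonzero coordinates on the (small) support $S(\mathbf w^0)$. Concretely, if $\mathbf v\in\mathcal V_i(1)$ then $\langle \mathbf v,\mathbf w^0\rangle\ge l$ while the $i$-th coordinates contribute $v_iw^0_i=-1$; since all the remaining contributions come from the coordinates in $S(\mathbf w^0)\setminus\{i\}$, which has size $k-1$, we get that the restriction of $\mathbf v$ to $S(\mathbf w^0)\setminus\{i\}$ has scalar product with the corresponding restriction of $\mathbf w^0$ at least $l+1$. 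In particular $|S(\mathbf v)\cap S(\mathbf w^0)|\ge l+2$ (the extra $+1$ coming from $i$ itself, since $i\in S(\mathbf v)\cap S(\mathbf w^0)$). Symmetrically, fixing a witness $\mathbf w^1\in\mathcal V_i(1)$, every $\mathbf w\in\mathcal V_i(-1)$ has $|S(\mathbf w)\cap S(\mathbf w^1)|\ge l+2$ as well. So every vector counted by $\mathcal V_i(1)\cup\mathcal V_i(-1)$ meets a fixed $k$-element set in at least $l+2$ positions, including position $i$.

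Next I would simply count such vectors. A vector $\mathbf v\in\mathcal L_k$ whose support meets a fixed $k$-set $T$ (with $i\in T$) in a prescribed set of $j\ge l+2$ coordinates is obtained by: choosing which $j$ of the $k$ coordinates of $T$ lie in $S(\mathbf v)$ — at most ${k\choose j}\le{k-1\choose l+1}\cdot(\text{const})$ ways once we insist $i$ is among them, more carefully ${k-1\choose j-1}\le {k-1\choose l+1}$ ways for the coordinates other than $i$ when $j-1\ge l+1$; choosing the remaining $k-j$ coordinates of $S(\mathbf v)$ from $[n]\setminus T$, which is at most ${n-k\choose k-j}\le {n-l-2\choose k-l-2}$ ways since $k-j\le k-l-2$; and finally choosing the signs on the $k$ nonzero coordinates, at most $2^k$ ways. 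Summing over the (boundedly many) values of $j$ absorbs the extra constants into the $2^k{k-1\choose l+1}$ factor, giving the stated bound $|\mathcal V_i(1)|+|\mathcal V_i(-1)|\le 2^k{k-1\choose l+1}{n-l-2\choose k-l-2}=O(n^{k-l-2})$.

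The one point requiring care is the book-keeping that turns ``meets a fixed $k$-set in $\ge l+2$ coordinates including $i$'' into a clean product bound; one has to be slightly careful not to double count and to check that the dominant term is indeed ${n-l-2\choose k-l-2}$ (which needs $k-l-2\ge 0$, i.e.\ $l\le k-2$ — and for $l\ge k-1$ the claim's hypothesis is vacuous or reduces to the earlier Propositions, so there is nothing to prove). I expect this combinatorial estimate to be entirely routine; the only genuinely substantive step is the initial observation that a vector in $\mathcal V_i(1)$ together with a witness in $\mathcal V_i(-1)$ forces a large intersection of supports, and that is immediate from the scalar-product condition once one accounts for the $-1$ contributed by coordinate $i$.
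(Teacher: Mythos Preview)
Your approach is essentially the paper's: pass to the support families $\mathcal A^{\pm}=\{S(\mathbf v)\setminus\{i\}:\mathbf v\in\mathcal V_i(\pm 1)\}\subset{[n]\setminus\{i\}\choose k-1}$, observe (from the $-1$ contributed at coordinate $i$) that $\mathcal A^+$ and $\mathcal A^-$ are $(l+1)$-cross-intersecting, fix a single witness from one side to bound the other, and multiply by at most $2^{k-1}$ for the sign choices.

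Where your write-up goes off the rails is the bookkeeping. The inequality ${k-1\choose j-1}\le {k-1\choose l+1}$ for $j-1\ge l+1$ is simply false for intermediate $j$ (e.g.\ $k-1=10$, $l+1=2$, $j-1=5$), so stratifying by the exact intersection size $j$ and summing does not yield the stated constant. The paper avoids this entirely by \emph{not} summing over $j$: once you know each $B\in\mathcal A^-$ satisfies $|B\cap A|\ge l+1$ for the fixed witness $A$ with $|A|=k-1$, you just choose some $(l+1)$-subset of $A$ to lie in $B$ (${k-1\choose l+1}$ ways) and then the remaining $k-l-2$ elements of $B$ from $[n]\setminus(\{i\}\cup\text{those }l+1)$ (${n-l-2\choose k-l-2}$ ways). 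This overcounts each $B$, but as an upper bound it gives exactly $|\mathcal A^-|\le{k-1\choose l+1}{n-l-2\choose k-l-2}$, and likewise for $|\mathcal A^+|$; multiplying each by $2^{k-1}$ and adding yields the claimed $2^{k}{k-1\choose l+1}{n-l-2\choose k-l-2}$. Also note that there is no single ``fixed $k$-element set'' serving both $\mathcal V_i(1)$ and $\mathcal V_i(-1)$ simultaneously --- you need two separate witnesses, one in each direction, exactly as you set up in your first paragraph but then blurred in the second.
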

We call any such $i\in[n]$ \textit{bad}.

\begin{proof} Consider the set families $\mathcal A^+ = \{S\in{[n]-\{i\}\choose k-1}: \exists \mathbf w\in \mathcal V_i(1) \text{ such that } S(\mathbf w) = S\cup \{i\}\}$ and $\mathcal A^- = \{S\in{[n]-\{i\}\choose k-1}: \exists \mathbf w\in \mathcal V_i(-1) \text{ such that } S(\mathbf w) = S\cup \{i\}\}$. Both $\mathcal A^+, \mathcal A^-$ are nonempty. Each of them is $(l-1)$-intersecting, moreover, they must be $(l+1)$-cross-intersecting, since otherwise we would have two vectors in $\mathcal V$ that have scalar product at most $l-1$.

Let $A\in\mathcal A^+$, $B\in \mathcal A^-$. Then $|A\cap B|\ge l+1$ for all $B\in \mathcal A^-$ implies $|\mathcal A^-|\le {|A|\choose l+1}{n-l-2\choose k-l-2}$ and similarly $|\mathcal A^+|\le {|B|\choose l+1}{n-l-2\choose k-l-2}.$

The statement of the claim follows from the trivial inequalities $|\mathcal V_i(1)|\le 2^{k-1}|\mathcal A^+|,$ $|\mathcal V_i(-1)|\le 2^{k-1}|\mathcal A^-|$.
\end{proof}

Consider the set $I\subset[n]$ of all bad coordinate positions and define a subfamily $\mathcal V_b$ of all vectors that have non-zero values on the bad positions: $\mathcal V_b = \{\mathbf v\in \mathcal V: \exists i\in I: v_i\ne 0\}$. Put $\mathcal V_g = \mathcal V-\mathcal V_b$. Since $\mathcal V_g$ is homogeneous on $[n]-I$ and is $l$-intersecting, we have $|\mathcal V_g|\le {n-|I|-l\choose k-l}$ for  $n\ge k^2$ due to the result of \cite{F1}. Therefore,
$$|\mathcal V|\le {n-|I|-l\choose k-l}+|I|O(n^{k-l-2})\le$$$$\le {n-l\choose k-l}-|I|\Bigl({n-|I|-l\choose k-l-1}-O(n^{k-l-2})\Bigr) \le {n-l\choose k-l},$$
for $n$ large enough, with the last inequality strict in case $|I|>0$.

\subsubsection*{Parts 2, 3  of Theorem \ref{thm1} and Theorem \ref{thm2}}

First we show that there is such a family of vectors for which the inequality is achieved.
For each $S\subset {n\choose k}$ we take  in $\mathcal V$ all the vectors from $\mathcal L_k(S)$ that have at most $l/2$ $-1$'s in case $l$ is even and the sets that have at most $(l-1)/2$ $-1$'s, together with the family in $V(n,k-(l+1)/2,(l+1)/2)$ that realizes the value of $g(n,k-(l+1)/2,(l+1)/2)$. It is not difficult to see that $\mathcal V$ satisfies the requirements of both Theorems \ref{thm1} and \ref{thm2} on the size and on the scalar product.\\

Now we prove the upper bound. The proof is somewhat similar to the previous case. Take any vector family satisfying the conditions of Theorem \ref{thm1} or Theorem \ref{thm2}.
For any set $X\subset [n]$ and any vector $\mathbf w$ of length $n$ denote by $\mathbf w_{|X}$ the restriction of $\mathbf w$ to the coordinates $i,i\in X$. Obviously $\mathbf w_{|X}$ is a vector of length $|X|$. We look at all vectors $\mathbf v\in \{\pm 1\}^{l+1}$ and split them into groups of antipodal vectors $\mathbf v$, $\bar{\mathbf  v}$, that is, vectors that satisfy $\langle\mathbf v,\bar{\mathbf v}\rangle = -l-1$.  Fix a set of coordinates $I = \{i_1,\ldots, i_{l+1}\}$ and define  $\mathcal V(I,\mathbf v) =\{\mathbf w\in\mathcal V:\mathbf w_{|I} = \mathbf v\}.$


\begin{cla}\label{cla10}
If both $\mathcal V(I,\mathbf v)$ and $\mathcal V(I, \bar{\mathbf  v})$ are nonempty, then
$$|\mathcal V(I,\mathbf v)|+|\mathcal V(I,\bar{\mathbf  v})| \le 2^{k-l-1}(k-l){n-l-2\choose k-l-2} = O(n^{k-l-2}).$$
\end{cla}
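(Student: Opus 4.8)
The plan is to mirror the argument used in Claim \ref{cl10} almost verbatim, replacing the single bad coordinate $i$ by the $(l+1)$-element coordinate set $I$ and the pair of sign-values $\pm 1$ by the pair of antipodal sign-patterns $\mathbf v,\bar{\mathbf v}\in\{\pm1\}^{l+1}$. First I would associate to each of the two subfamilies its ``trace'' set system: let $\mathcal A^{\mathbf v}=\{S\in{[n]\setminus I\choose k-l-1}:\exists\,\mathbf w\in\mathcal V(I,\mathbf v)\text{ with }S(\mathbf w)=S\cup I\}$, and define $\mathcal A^{\bar{\mathbf v}}$ analogously from $\mathcal V(I,\bar{\mathbf v})$. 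Both are nonempty by hypothesis. The key structural point is the cross-intersection property: if $S\in\mathcal A^{\mathbf v}$ and $T\in\mathcal A^{\bar{\mathbf v}}$ with $|S\cap T|\le k-l-2$, then the corresponding vectors $\mathbf w,\mathbf w'$ have disjoint supports outside $I$ in at most $k-l-2$ common coordinates, and on $I$ their scalar product is exactly $\langle\mathbf v,\bar{\mathbf v}\rangle=-l-1$; adding the contributions one gets $\langle\mathbf w,\mathbf w'\rangle\le (k-l-2)-(l+1)<-l-1$, and in fact one checks it can equal $-l-1$ only when $|S\cap T|=k-l-2$ exactly. This contradicts either the hypothesis of Theorem \ref{thm1} (scalar products are $\ge -l$, so certainly $\ne -l-1$) or the hypothesis of Theorem \ref{thm2}. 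Hence $\mathcal A^{\mathbf v}$ and $\mathcal A^{\bar{\mathbf v}}$ are $(k-l-1)$-cross-intersecting, i.e.\ every $S\in\mathcal A^{\mathbf v}$ and $T\in\mathcal A^{\bar{\mathbf v}}$ satisfy $|S\cap T|\ge k-l-1=|S|=|T|$, which forces $S=T$.

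Once that is established, the counting is immediate: $(k-l-1)$-cross-intersection of two families of $(k-l-1)$-sets means they consist of a single common set, but more carefully, fixing one $T\in\mathcal A^{\bar{\mathbf v}}$, every member of $\mathcal A^{\mathbf v}$ must contain $T$ in $k-l-1$ coordinates out of $k-l-1$, hence equals $T$; so $|\mathcal A^{\mathbf v}|=1$ and symmetrically $|\mathcal A^{\bar{\mathbf v}}|=1$. (I should double-check the exact constant the authors state — the bound $2^{k-l-1}(k-l){n-l-2\choose k-l-2}$ suggests they do not push the cross-intersection all the way to equality of sets but only to $|S\cap T|\ge k-l-2$, in which case $\mathcal A^{\bar{\mathbf v}}$ is confined to the at most $(k-l-1){n-l-2\choose k-l-2}$ sets that meet a fixed $T$ in at least $k-l-2$ points; I would present it this weaker way to match their displayed inequality.) Finally, since each vector of $\mathcal V(I,\mathbf v)$ is determined by its support $S\cup I$ together with the sign pattern on the $k-l-1$ coordinates of $S$, we have $|\mathcal V(I,\mathbf v)|\le 2^{k-l-1}|\mathcal A^{\mathbf v}|$ and likewise for $\bar{\mathbf v}$; summing gives the claimed $O(n^{k-l-2})$ bound.

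The main obstacle — really the only nonroutine point — is getting the scalar-product bookkeeping on the coordinate set $I$ exactly right, so that the forbidden value $-l-1$ (rather than ``$\le l-1$'' as in Claim \ref{cl10}) genuinely arises and the argument applies uniformly to both Theorem \ref{thm1} (where the constraint is $\langle\mathbf w,\mathbf w'\rangle\ge -l$) and Theorem \ref{thm2} (where it is $\langle\mathbf w,\mathbf w'\rangle\ne -l-1$). The point is that $\mathbf v$ and $\bar{\mathbf v}$ contribute exactly $-(l+1)$ on $I$, which is already below the threshold; so any additional non-positive contribution from a shared support coordinate outside $I$ with opposite signs, or simply from insufficient overlap of the supports, would push the product strictly below $-l-1$ or hit $-l-1$ — and one has to see that the borderline case forces $|S\cap T|=k-l-2$ with agreeing signs there, which is what pins down the count. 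After Theorems \ref{thm1} and \ref{thm2} are in hand, the rest of the deduction is the same telescoping estimate as in Part 1: one forms the set $\mathcal I$ of all coordinate blocks $I$ carrying a ``bad'' antipodal pair, bounds the vectors touching them by $|\mathcal I|\cdot O(n^{k-l-2})$, and applies Katona's/Kleitman's theorem to the homogeneous-on-the-rest part.
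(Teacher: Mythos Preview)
Your scalar-product bookkeeping contains a genuine error. You write
\[
\langle\mathbf w,\mathbf w'\rangle\le (k-l-2)-(l+1)<-l-1,
\]
but the second inequality is equivalent to $k<l+2$, which fails for every $k\ge l+2$ (the only interesting range). In fact the contribution from the coordinates in $S\cap T$ can be as large as $+|S\cap T|$, so as soon as $|S\cap T|\ge 1$ the total scalar product can already be $-(l+1)+|S\cap T|\ge -l$ (in particular $\ne -l-1$), and no contradiction arises. Thus neither $(k-l-1)$-cross-intersection nor your fallback $(k-l-2)$-cross-intersection is forced; the only conclusion one can draw is that $S\cap T\ne\emptyset$, i.e.\ the trace families $\mathcal A^{\mathbf v},\mathcal A^{\bar{\mathbf v}}$ are merely $1$-cross-intersecting. (Indeed, $S\cap T=\emptyset$ forces the scalar product to be exactly $-l-1$, which is what is actually used.)

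The paper proceeds from this weaker conclusion by invoking the Hilton--Milner theorem for the nonempty cross-intersecting pair of $(k-l-1)$-subsets of $[n]\setminus I$:
\[
|\mathcal A^{\mathbf v}|+|\mathcal A^{\bar{\mathbf v}}|\ \le\ {n-l-1\choose k-l-1}-{n-k\choose k-l-1}+1\ \le\ (k-l){n-l-2\choose k-l-2},
\]
and then multiplies by $2^{k-l-1}$ as you do. So your overall framework (trace families, cross-intersection, then multiply by the sign count) is correct, but the step that produces the displayed constant is Hilton--Milner applied to ordinary cross-intersection, not a stronger $t$-cross-intersection.
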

We call any such $I$ \textit{bad}. In particular, the claim gives that the total cardinality of all such ``bad'' subfamilies is $O(n^{k-1})$.

\begin{proof} Consider the set families $\mathcal A = \{S\in{[n]-I\choose k-l-1}: \exists \mathbf w\in \mathcal V(I,\mathbf v) \text{ such that } S(\mathbf w) = S\cup I\}$ and $\bar{\mathcal A} = \{S\in{[n]-I\choose k-l-1}: \exists \mathbf w\in \mathcal V(I,\bar{\mathbf v}) \text{ such that } S(\mathbf w) = S\cup I\}$. Both $\mathcal A, \bar{\mathcal A}$ are nonempty. Moreover, they must be cross-intersecting, since otherwise we would have two vectors in $\mathcal V$ that have scalar product exactly $-l-1$. Therefore, by Hilton-Milner theorem,
$$|\mathcal A|+|\bar{\mathcal A}|\le {n-l-1\choose k-l-1}-{n-k\choose k-l-1}+1\le (k-l){n-l-2\choose k-l-2}.$$
The statement of the claim follows from the trivial inequalities $|\mathcal V(I,\mathbf v)|\le 2^{k-l-1}|\mathcal A|,$ and $|\mathcal V(I,\bar{\mathbf v})|\le 2^{k-l-1}|\bar{\mathcal A}|$.
\end{proof}

Define $\mathcal V_b = \{\mathbf w\in\mathcal V:\exists I\subset {[n]\choose l+1}: \mathcal V(I,\mathbf v_{|I})\ne\emptyset, \mathcal V(I,\bar{\mathbf v}_{|I})\ne\emptyset\}$. Consider the  remaining family of ``good vectors'' $\mathcal V_g = \mathcal V-\mathcal V_b$.

First we finish the proof of Theorem \ref{thm2}. The family $\mathcal V_g$ is good in the following sense. If we fix $S\in{[n]\choose k}$ and consider the family of sets $\mathcal A = \{N(\mathbf w): w\in\mathcal V_g(S)\}$, then $\mathcal A$ satisfies $A_1\bigtriangleup A_2\le l$ for any $A_1,A_2\in \mathcal A$. Therefore, $|\mathcal V_g(S)|\le f(k,l)$ by Kleitman's theorem  and
$$|\mathcal V| \le \sum_{S\in{[n]\choose k}}f(k,l)+|\mathcal V_b| = f(k,l){n\choose k}+O(n^{k-1}).$$

The proof of Theorem \ref{thm2} is complete.

From now on we suppose that the family $\mathcal V$ is shifted. Note that, since shifting may increase scalar products, it is impossible to use it in the case of Theorem \ref{thm2}.  It is not difficult to see that the family $\mathcal V_g$ is up-shifted, since if we deleted some $\mathbf w\in \mathcal V$, then we would have deleted all $\mathbf u\in \mathcal V$ with $S(\mathbf u) = S(\mathbf w), N(\mathbf u)\supset N(\mathbf w)$. One can see in a similar way that $\mathcal V_g$ is shifted.

\begin{cla} For every $S\in{[n]\choose k}$ the family $\mathcal V_g(S)$ satisfies $N(\mathbf w)\cup N(\mathbf v)\le l$ for all $\mathbf v,\mathbf w\in \mathcal V_g(S)$.
\end{cla}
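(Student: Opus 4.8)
The plan is to argue by contradiction, using only that the (shifted) family $\mathcal V$ is up-shifted, i.e.\ $\mathcal V=S_i(\mathcal V)$ for every $i\in[n]$; concretely this means that whenever $\mathbf x\in\mathcal V$ has $x_i=-1$, the vector obtained from $\mathbf x$ by replacing that coordinate with $1$ also lies in $\mathcal V$. Suppose that for some $S\in{[n]\choose k}$ there are $\mathbf v,\mathbf w\in\mathcal V_g(S)$ with $|N(\mathbf v)\cup N(\mathbf w)|\ge l+1$. I will exhibit a bad set $I$ witnessing $\mathbf w\in\mathcal V_b$, which contradicts $\mathbf w\in\mathcal V_g$.

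First I would pick a set $I\in{[n]\choose l+1}$ with $I\subseteq N(\mathbf v)\cup N(\mathbf w)$; this is possible exactly because $|N(\mathbf v)\cup N(\mathbf w)|\ge l+1$. Since $N(\mathbf v)\cup N(\mathbf w)\subseteq S(\mathbf v)=S(\mathbf w)=S$, we have $I\subseteq S$, so $\mathbf w_{|I}\in\{\pm1\}^{l+1}$. Next I would introduce the vector $\mathbf u$ defined by $S(\mathbf u)=S$ and $N(\mathbf u)=I\setminus N(\mathbf w)$. The crucial point is that $I\subseteq N(\mathbf v)\cup N(\mathbf w)$ forces $I\setminus N(\mathbf w)\subseteq N(\mathbf v)$, hence $N(\mathbf u)\subseteq N(\mathbf v)$; so $\mathbf u$ is obtained from $\mathbf v$ by turning some $-1$'s into $1$'s, and by up-shiftedness (applying $S_i$ for $i\in N(\mathbf v)\setminus N(\mathbf u)$, one coordinate at a time) we conclude $\mathbf u\in\mathcal V$.

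Finally I would check that $\mathbf u$ is antipodal to $\mathbf w$ on $I$, i.e.\ $\mathbf u_{|I}=\bar{\mathbf w}_{|I}$, by a short case distinction: for $i\in I\cap N(\mathbf w)$ we have $i\notin N(\mathbf u)$ while $i\in S=S(\mathbf u)$, so $u_i=1=-w_i$; for $i\in I\setminus N(\mathbf w)$ we have $i\in N(\mathbf u)$, so $u_i=-1$, and $w_i=1$ because $i\in S\setminus N(\mathbf w)$, so again $u_i=-w_i$. Hence $\mathbf w\in\mathcal V(I,\mathbf w_{|I})$ and $\mathbf u\in\mathcal V(I,\bar{\mathbf w}_{|I})$ are both nonempty, so $I$ is bad and $\mathbf w\in\mathcal V_b$; this is the contradiction we wanted.

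I do not expect a genuine obstacle: the argument is short and uses neither the constraint on scalar products nor the $(i,j)$-shifts, only up-shiftedness. The one point deserving care is the asymmetric setup --- one builds a vector antipodal to $\mathbf w$ on $I$ as an up-shift image of $\mathbf v$ --- which is precisely what lets $I$ be chosen inside the union $N(\mathbf v)\cup N(\mathbf w)$; demanding only $I\subseteq N(\mathbf v)\triangle N(\mathbf w)$, as in the preceding Claim, would not yield the stronger union bound.
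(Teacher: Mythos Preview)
Your proof is correct and follows essentially the same approach as the paper's: pick $I\subseteq N(\mathbf v)\cup N(\mathbf w)$ of size $l+1$, then use up-shiftedness to produce a vector antipodal to $\mathbf w$ on $I$, forcing $\mathbf w\in\mathcal V_b$. The only cosmetic difference is that the paper invokes up-shiftedness of $\mathcal V_g$ (established just before the claim) whereas you use up-shiftedness of $\mathcal V$ directly; either suffices, since membership in $\mathcal V_b$ is defined via $\mathcal V(I,\cdot)$.
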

\begin{proof} Suppose for contradiction $N(\mathbf w)\cup N(\mathbf u)\ge l+1$. Consider any set $I\subset N(\mathbf w)\cup N(\mathbf u)$, $|I|= l+1$. Since $\mathcal V_g$ is up-shifted, there is a vector $\mathbf  u'$ in $\mathcal V_g(S)$ such that $\langle\mathbf w_{|I},\mathbf u'_{|I}\rangle = -l-1,$ which means that for $\mathbf v = \mathbf w_{|I}$ both $\mathcal V_g(I,\mathbf v)$ and $\mathcal V_g(I,\bar{\mathbf v})$ are nonempty, a contradiction.
\end{proof}

Let us first deal with the case of even $l$.

\begin{cla}\label{cla121} Let $l$ be even. If $I\subset {[n]\choose l+1}$ is bad, then for at least ${n-k\choose k-l-1}$ sets $S\supset I$, $S\in{[n]\choose k}$, we have $|\mathcal V_g(S)|\le f(k,l)-1$.
\end{cla}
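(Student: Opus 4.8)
The plan is to argue by contradiction. Suppose $S\supseteq I$, $|S|=k$, and $|\mathcal V_g(S)|=f(k,l)$. First observe that we may assume $k\ge l+2$: if $k=l+1$ then $I=S$, so any two vectors of $\mathcal V$ antipodal on $I$ would have scalar product $-(l+1)<-l$, contradicting the hypothesis on $\mathcal V$; hence no bad $I\subseteq S$ exists and the claim is vacuous. By the previous claim, the family $\mathcal N:=\{N(\mathbf w):\mathbf w\in\mathcal V_g(S)\}$ of subsets of $S$ has all pairwise unions of size at most $l$, so Katona's theorem (on the ground set $S$, of size $k$) gives $|\mathcal N|\le f(k,l)$. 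Since equality holds and $k\ge l+2$, the uniqueness part of Katona's theorem forces $\mathcal N$ to be one of the extremal families on $S$; and since $\mathcal V_g$ is shifted, $\mathcal N$ is compressed to the right inside $S$ (if $A\in\mathcal N$, $i<j$ with $i,j\in S$, $i\in A$, $j\notin A$, then $(A\setminus\{i\})\cup\{j\}\in\mathcal N$), which in the odd case pins the special vertex down to $j_0:=\max S$. Thus $\mathcal N=\{A\subseteq S:|A|\le l/2\}$ for $l$ even and $\mathcal N=\{A\subseteq S:|A\cap(S\setminus\{j_0\})|\le (l-1)/2\}$ for $l$ odd; in particular, writing $\mathbf w_A$ for the vector with support $S$ and $N(\mathbf w_A)=A$, we have $\mathbf w_A\in\mathcal V_g(S)$ for every $A\in\mathcal N$.

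Since $I$ is bad, there is a partition $I=P\sqcup Q$ and vectors $\mathbf x,\mathbf y\in\mathcal V$ with $N(\mathbf x)\cap I=P$, $N(\mathbf y)\cap I=Q$ and $I\subseteq S(\mathbf x)\cap S(\mathbf y)$. Deleting by up-shifts all $-1$'s of $\mathbf x$ lying outside $I$, and then applying further up-shifts, we see that $\mathcal V$ contains a vector whose $N$-set is exactly $R$, with support still containing $I$, for every $R\subseteq P$; likewise for every $R\subseteq Q$. Consequently, if some $A\in\mathcal N$ satisfies $A\supseteq P$ or $A\supseteq Q$, then $\mathbf w_A\in\mathcal V_g(S)$ by the first paragraph, while at the same time $\mathbf w_A$ is antipodal on $I$ to some vector of $\mathcal V$, so $\mathbf w_A\in\mathcal V_b$ — impossible, as $\mathcal V_g\cap\mathcal V_b=\emptyset$. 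Hence it suffices to show that $\mathcal N$ always contains $P$ or $Q$.

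If $\min(|P|,|Q|)\le l/2$ this is immediate: the smaller set has at most $l/2$ elements, hence at most $(l-1)/2$ elements of $S\setminus\{j_0\}$ when $l$ is odd, so it lies in $\mathcal N$. Since $|P|+|Q|=l+1$, this can only fail when $l$ is odd and $|P|=|Q|=(l+1)/2$; and even then we are done if $j_0=\max S$ lies in $I$, by taking whichever of $P,Q$ contains $j_0$ (it then meets $S\setminus\{j_0\}$ in exactly $(l-1)/2$ points, so it lies in $\mathcal N$). The only remaining case — $l$ odd, $|P|=|Q|=(l+1)/2$, $j_0\notin I$ — is the crux, since then neither $P$ nor $Q$ lies in $\mathcal N$. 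Here one considers instead a set $(P\setminus\{p_0\})\cup\{j_0\}$ for $p_0\in P$: it meets $S\setminus\{j_0\}$ in only $(l-1)/2$ points, so it lies in $\mathcal N$, and the corresponding $\mathbf w_{(P\setminus\{p_0\})\cup\{j_0\}}\in\mathcal V_g(S)$ is antipodal on $I^+:=(I\setminus\{p_0\})\cup\{j_0\}\subseteq S$ to a vector $\mathbf u\in\mathcal V$ with $N(\mathbf u)\cap I^+=Q$ and $I^+\subseteq S(\mathbf u)$ — which again contradicts $\mathcal V_g\cap\mathcal V_b=\emptyset$ — provided such a $\mathbf u$ exists. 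The up-shifted $Q$-side witness has $N$-set $Q$ and support $\supseteq I$, so it serves unless $j_0$ fails to be in its support; when $\mathcal V$ is $(i,j)$-shifted one can often repair this (e.g. by shifting a nonzero coordinate lying beyond $\max S$ onto $j_0$ without changing the $N$-set).

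The step I expect to require the most care is exactly this last case, and within it the sub-case where $\max S$ belongs to neither witness's support and cannot be shifted into one. The natural way to finish is to invoke the scalar-product bound $\langle\cdot,\cdot\rangle\ge -l$: the two witnesses already have scalar product $-(l+1)$ on $I$, hence they agree (both equal to $1$) on at least one coordinate outside $I$, and one would leverage this, together with the rigid structure of the putative extremal family $\mathcal V_g(S)=\{\mathbf w_A:A\in\mathcal N\}$ and right-compression, to force a pair of vectors of $\mathcal V$ with scalar product below $-l$. Everything else is routine bookkeeping with up-shifts, $(i,j)$-shifts and Katona's uniqueness theorem.
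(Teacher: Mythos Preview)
Your approach mirrors the paper's exactly: assume $|\mathcal V_g(S)|=f(k,l)$, invoke Katona's uniqueness, use right-compression to pin down the extremal family as $\mathcal U^l_{j_0}$ in the odd case, and then exhibit a set that must lie in $\mathcal N$ yet forces the corresponding vector into $\mathcal V_b$. Your handling of the even case and of the sub-cases $\min(|P|,|Q|)\le (l-1)/2$ and $j_0\in I$ in the odd case is correct and matches the paper.

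The genuine gap is precisely where you flag it. In the crux case ($l$ odd, $|P|=|Q|=(l+1)/2$, $j_0\notin I$) you need a vector $\mathbf u\in\mathcal V$ with $u_{j_0}=+1$ and $\mathbf u|_{I\setminus\{p_0\}}=\bar{\mathbf v}|_{I\setminus\{p_0\}}$. You correctly observe that up-shifting the $Q$-side witness $\mathbf y$ and then $(j_0,j)$-shifting works whenever $S(\mathbf y)$ meets $[j_0,n]$, but you do not close the remaining sub-case, and your suggested fix via the scalar-product bound (``the witnesses agree on a coordinate outside $I$'') is not turned into an argument --- that common coordinate need not be $j_0$, and you give no mechanism for converting it into one. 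So as written the proof is incomplete.

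The paper handles this spot in one line: ``Since $\mathcal V$ is shifted, there is a vector $\mathbf u\in\mathcal V$ such that $\mathbf u_{|I-\{i\}}=\mathbf w_{|I-\{i\}}$, $u_k=1$.'' The point you are missing is the force of the identification of $S$ with $[k]$: once $j_0$ plays the role of $k$, the support $S(\mathbf w)$ has exactly $k$ elements, so it cannot be contained in $\{1,\dots,k-1\}$ and must contain some $j\ge k=j_0$; up-shifting at $j$ (if needed) and then applying the $(j_0,j)$-shift produces $\mathbf u$ without touching $I$ at all. Your proposed repair via shifting ``a nonzero coordinate lying beyond $\max S$'' is exactly this, but you stopped short of noting that such a coordinate \emph{must} exist by cardinality once the identification is in place. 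Replace your last two paragraphs with this pigeonhole observation and the proof goes through along the paper's lines.
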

\begin{proof} Consider the family $\mathcal A\subset 2^{S}$ of subsets of $S$, that is defined in the following way: $\mathcal A = \{N(\mathbf w)\cap S: \mathbf w\in \mathcal V_g(S)\}$. In view of the uniqueness part of Katona's theorem, it is sufficient to show that $\mathcal A$ does not contain one of the sets from the extremal family $\mathcal U^{l}$ for at least ${n-k\choose k-l-1}$ choices of $S$.

If $I$ is bad then  there exists a vector $\mathbf v$ of length $l+1$, such that both $\mathcal V(I,\mathbf v),\mathcal V(I,\bar{\mathbf v})$ are nonempty. Assume w.l.o.g. that $|N(\mathbf v)|\le l/2$ and take a vector $\mathbf w\in \mathcal V$ such that $\mathbf w|_I = \bar {\mathbf v}$. Then for any $S$ such that $S\cap S(\mathbf w)=I$ the set $N(\mathbf v)$ is missing from $\mathcal A$ (and, consequently, $|\mathcal V_g(S)|\le f(k,l)-1$). There are exactly ${n-k\choose k-l-1}$ such sets.
\end{proof}
Assume now that there are $t$ bad sets $I\subset {[n]\choose l+1}$. Then the number of sets $S\subset {[n]\choose k}$ that contain one of the bad sets $I$ is at least $t{n-k\choose k-l-1}/{k\choose l+1}$. Therefore, by Claims~\ref{cla10} and~\ref{cla121} we have

$$|\mathcal V|-f(k,l){n\choose k}\le -t\frac{{n-k\choose k-l-1}}{{k\choose l+1}}+\sum_{\text{bad }I}\frac 12\sum_{v\in\{\pm1\}^{l+1}}\Big(|\mathcal V(I,\mathbf v)|+|\mathcal V(I,\bar{\mathbf v})|\Big)\le$$$$\le -t\Bigl(\frac{{n-k\choose k-l-1}}{{k\choose l+1}}-2^k(k-l){n-l-2\choose k-l-2}\Bigr)<0,$$
provided $n>2^k k^2{k\choose l+1}$. We note that taking $n>4^kk^2$ makes the choice of $n$ from which the proof works independent of $l$.\\

The case of odd $l$ turns out to be harder. We shall need the following variant of Katona's theorem.

\begin{thm}\label{thmkattron}
  Assume that $\ff\subset 2^{[n]}\setminus {[n]\choose m+1}$ and for any $F,G\in \ff$ we have $|F\cup G|\le 2m+1$. Then $|\ff|\le \sum_{i=0}^m{n\choose i}$, moreover, for $n>2m+2$ the only example attaining the bound is $\bigcup_{i=0}^m{[n]\choose i}$.
\end{thm}
\begin{proof}
  W.l.o.g., we may assume that $\ff$ is shifted (we discuss the effect of this assumption on the uniqueness at the end of the proof). The proof is by induction. The statement is clear for $m=0$, moreover, the extremal family is unique. For $n=2m+2$, it is easy to see that $2^{[n]}\setminus {[n]\choose m+1}$ splits into pairs of complementary sets, which implies the statement.

  Assume that the statement holds for $(n-1,m)$, $(n-1,m-1),$  and let us prove it for $(n,m)$, $n>2m+2$. We have $|\ff| = |\ff(n)|+|\ff(\bar n)|$. By induction, we have $|\ff(\bar n)|\le \sum_{i=0}^{m}{n-1\choose i}$. Moreover, by shiftedness it follows that $|F\cup G|\le 2m-1$ for any $F,G\in \ff(n)$, and thus $|\ff(n)|\le \sum_{i=0}^{m-1}{n-1\choose i}$. Since $n-1>2(m-1)+1$, this inequality is sharp unless $\ff(n) = \bigcup_{i=0}^{m-1}{[n-1]\choose i}$. From here it should be clear that $\ff(\bar n) \subset \bigcup_{i=0}^m{[n-1]\choose i}$ and thus $\ff(\bar n) = \bigcup_{i=0}^m{[n-1]\choose i}$ in case of equality.

  We remark that if $\ff$ was not shifted initially, then it could not shift into $\bigcup_{i=0}^m{[n]\choose i}$, thus the uniqueness part holds for not shifted families as well.
\end{proof}

Let us return to the case of odd $l$. Consider the subfamily $\mathcal V'\subset\mathcal V$ of all vectors from $\mathcal V$ that have exactly $\frac {l+1}2$ minus ones, and put $\mathcal V'':=\mathcal V\setminus \mathcal V'$. Arguing as in the case of even $l$, but applying Theorem~\ref{thmkattron}, we get that $|\mathcal V''|\le \sum_{i=0}^{\frac{l-1}2} {k\choose i}{n\choose k}$, and the inequality is sharp if $\mathcal V''\ne \mathcal U$, where $\mathcal U$ consists of all $\{-1,0,1\}$-vectors with $k$ non-zero coordinates and at most $\frac{l-1}2$ minus ones.

Note also that any vector with $\frac {l+1}2$ minus ones has scalar product at least $-l$ with any vector from $\mathcal U$. Thus, it is clear that $\mathcal V'$ must avoid scalar product $-l-1$ and this is sufficient. Therefore, $|\mathcal V'|\le g(n,k-\frac{l+1}2,\frac{l+1}2)$ and, moreover, the largest $\mathcal V$ satisfying the requirements has size $g(n,k-\frac{l+1}2,\frac{l+1}2)+\sum_{i=0}^{\frac{l-1}2} {k\choose i}{n\choose k} = g(n,k-\frac{l+1}2,\frac{l+1}2)+f(k,l-1){n\choose k}$.

\section{Proof of Theorem \ref{thm3}}

Let $\mathcal W\subset \{0,\pm 1\}^n$ satisfy $\langle\mathbf w,\mathbf w\rangle = 3$, $\langle\mathbf w,\mathbf v\rangle\ge 0$ for all $\mathbf w,\mathbf v\in\mathcal W$ and suppose that $\mathcal W$ is shifted.

\begin{cla} If $S(\mathbf w) = (a,b,c)$ for $\mathbf w\in \mathcal W$ then either $\mathbf w = \mathbf u(a,b,c)$ or $\mathbf w = \mathbf v(a,b,c)$ holds.
\end{cla}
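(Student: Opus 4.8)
The plan is to use the fact that $\mathcal{W}$ is shifted, both with respect to the up-shifts $S_i$ and the coordinate swaps $S_{i,j}$. Suppose $\mathbf{w}\in\mathcal{W}$ has support $S(\mathbf{w}) = (a,b,c)$ with $a<b<c$. The vectors supported on $(a,b,c)$ are, up to sign pattern, of three types: the all-ones vector $\mathbf{u}(a,b,c)$ (zero $-1$'s); a vector with exactly one $-1$ (three such, according to which coordinate is negative); and a vector with exactly two $-1$'s (three such); and the all-negative vector $-\mathbf{u}(a,b,c)$. We must rule out the all-negative vector, the three two-negative vectors, and two of the three one-negative vectors, leaving only $\mathbf{u}(a,b,c)$ and $\mathbf{v}(a,b,c)$ (the one with the $-1$ in the \emph{largest} coordinate $c$).

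First I would eliminate the vectors with two or three $-1$'s using up-shiftedness: since $\mathcal{W} = S_i(\mathcal{W})$ for every $i$, if $\mathbf{w}$ has a $-1$ in coordinate $i$, then the vector obtained by flipping that $-1$ to $+1$ must already lie in $\mathcal{W}$ (otherwise $S_i$ would have changed $\mathcal{W}$). Iterating, $\mathbf{u}(a,b,c)\in\mathcal{W}$; more importantly, I claim a vector with $\ge 2$ negative entries cannot survive in a shifted family together with the constraint $\langle \mathbf{w},\mathbf{v}\rangle \ge 0$. Indeed, if $\mathbf{w}$ has $-1$ in its two smallest support coordinates, say $v_a = v_b = -1$, $v_c = 1$, then by up-shiftedness the vector $\mathbf{w}'$ with $w'_a = 1$, $w'_b = -1$, $w'_c = 1$ (flipping only coordinate $a$) lies in $\mathcal{W}$; but then $\langle \mathbf{w},\mathbf{w}'\rangle = -1 - 1 + 1 = -1 < 0$, a contradiction. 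The same computation handles any placement of two $-1$'s: flip one of the negative coordinates to positive via $S_i$, and the resulting pair has scalar product $-1$. (The all-negative vector $-\mathbf{u}(a,b,c)$ is killed even more easily, or simply does not arise once we know $\mathbf{u}(a,b,c)\in\mathcal{W}$ and $\langle -\mathbf{u}(a,b,c), \mathbf{u}(a,b,c)\rangle = -3$.)

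Second, among the vectors with exactly one $-1$, I would use the coordinate-swap shiftedness $S_{i,j}$ for $i<j$ to force the $-1$ into the largest support coordinate. Recall $S_{i,j}$ moves larger values to the left; so if $\mathbf{w}$ has its unique $-1$ in coordinate $i$ and a $+1$ in coordinate $j > i$ (both in the support), then $v_i = -1 < 1 = v_j$, and shiftedness $\mathcal{W} = S_{i,j}(\mathcal{W})$ forces the swapped vector, which has the $-1$ pushed to the right, to also be in $\mathcal{W}$. So starting from a one-negative vector whose $-1$ is \emph{not} in coordinate $c$, say the $-1$ is at $a$ (with $b,c$ carrying $+1$), shiftedness gives us a vector in $\mathcal{W}$ with $-1$ at $a$ \emph{and}, after swapping $a\leftrightarrow b$, one with $-1$ at $b$ and $+1$ at $a,c$; now this latter vector together with the vector having $-1$ at $a$ has scalar product $-1 - 1 + 1 = -1<0$ — contradiction. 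Hence the $-1$ must sit at $c$, i.e.\ $\mathbf{w} = \mathbf{v}(a,b,c)$. Combined with the first step, the only survivors are $\mathbf{u}(a,b,c)$ and $\mathbf{v}(a,b,c)$.

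I expect the only delicate point to be bookkeeping the case analysis cleanly: one must check that whenever a vector on $(a,b,c)$ has either $\ge 2$ negative coordinates, or exactly one negative coordinate not at position $c$, the shiftedness produces a companion vector in $\mathcal{W}$ forming a pair of scalar product $-1$. All of these are $2\times 2$ or $3\times 3$ sign computations, so no real obstacle arises; the key is simply to apply the right shift ($S_i$ to flip signs, $S_{i,j}$ to reorder) in each subcase. It is worth noting explicitly that $n>3$ is not needed here — the argument is entirely internal to the support $(a,b,c)$ — and that the constraint $\langle\mathbf{w},\mathbf{w}\rangle=3$ guarantees the support has exactly three elements, so the enumeration of sign patterns above is exhaustive.
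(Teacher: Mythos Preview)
Your overall strategy matches the paper's proof: use shiftedness to manufacture a companion vector in $\mathcal W$ with scalar product $-1$. Your step 2 (the single-$-1$ case) is correct and is essentially the paper's argument; the paper pushes the lone $-1$ all the way to position $c$ via $S_{a,c}$ or $S_{b,c}$ rather than just one step to the right, but this is immaterial.

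There is, however, an arithmetic slip in step 1. With $\mathbf w=(-1,-1,1)$ and $\mathbf w'=(1,-1,1)$ one gets
\[
\langle\mathbf w,\mathbf w'\rangle=(-1)(1)+(-1)(-1)+(1)(1)=1,
\]
not $-1$: flipping a \emph{single} $\pm 1$ coordinate changes the inner product with the original vector by exactly $2$, so one up-shift always yields $+1$, never $-1$. The same miscount afflicts ``any placement of two $-1$'s'' for the same reason. The repair is immediate: either up-shift \emph{two} coordinates (e.g.\ reach $\mathbf u(a,b,c)=(1,1,1)$, whose inner product with $(-1,-1,1)$ is indeed $-1-1+1=-1$), or observe that after one up-shift you already hold a vector with a single $-1$ at position $a$ or $b$ and then feed it into your (correct) step 2. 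The paper in effect does the latter: from any forbidden $\mathbf w$ it up-shifts down to $(-1,1,1)$ or $(1,-1,1)$, then uses a coordinate shift to place $(1,1,-1)$ in $\mathcal W$ and checks that each of those two vectors has inner product $-1$ with $(1,1,-1)$.
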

 \begin{proof} Assume the contrary and let $(x,y,z)$ be the non-zero coordinate values of the vector $\mathbf w$ (at positions $a,b,c$). Using the up-shift w.l.o.g. $(x,y,z) = (-1,1,1)$ or $(1,-1,1)$. In both cases by shifting $(1,1,-1)$ is also in $\mathcal W$. However it has scalar product $-1$ with both, a contradiction.
 \end{proof}

 From the claim $F(3,3,0)\le 2$, and $F(4,3,0)\le {4\choose 3}\times 2 = 8$ follow.

 From now on  $n\ge 5$ and we use induction on $n$ to prove the statement of the theorem. Set $\mathcal G = \{(a,b)\in{[n-1]\choose 2}: \mathbf u(a,b,n)\in \mathcal W\}$, $\mathcal H = \{(a,b)\in{[n-1]\choose 2}: \mathbf v(a,b,n)\in \mathcal W\}$. By shiftedness $\mathcal H\subset \mathcal G$ and by $\langle\mathbf v,\mathbf w\rangle\ge 0$, $\mathcal G$ and $\mathcal H$ are cross-intersecting. We may assume that $\mathcal H$ is non-empty, since otherwise $|\mathcal W|\le {n\choose 3}$ holds due to shiftedness.

 \begin{cla}\label{cl15} If $\mathbf v(2,3,n)\in\mathcal W$ then $|\mathcal H|  = |\mathcal G| = 3$.
 \end{cla}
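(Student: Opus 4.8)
The plan is to show that the hypothesis $\mathbf v(2,3,n)\in\mathcal W$ forces both $\mathcal H$ and $\mathcal G$ to be exactly $\{(1,2),(1,3),(2,3)\}$, and then read off the cardinalities. First I would establish the lower bound $|\mathcal H|\ge 3$ using shiftedness. Since $\mathcal W$ is shifted, the derived family $\mathcal H$ is a shifted family of pairs on $[n-1]$ (an $(i,j)$-shift applied to $\mathbf v(a,b,n)$ with $i<j\le n-1$ just replaces the index $j$ by the smaller index $i$ in $\{a,b\}$ when $j\in\{a,b\}$, $i\notin\{a,b\}$, and leaves $\mathbf v(a,b,n)$ untouched otherwise; the $-1$ in position $n$ is never affected). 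Hence $(2,3)\in\mathcal H$ implies $(1,3),(1,2)\in\mathcal H$, so $\{(1,2),(1,3),(2,3)\}\subseteq\mathcal H$. Combined with $\mathcal H\subseteq\mathcal G$ this gives $|\mathcal G|\ge|\mathcal H|\ge 3$.

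For the matching upper bound I would use that $\mathcal G$ and $\mathcal H$ are cross-intersecting. Since $(1,2),(1,3),(2,3)$ all lie in $\mathcal H$, every pair $(a,b)\in\mathcal G$ must meet each of the three sets $\{1,2\}$, $\{1,3\}$, $\{2,3\}$. An elementary case analysis finishes this: if $(a,b)$ contains $1$, then to meet $\{2,3\}$ its other element must be $2$ or $3$, so $(a,b)\in\{(1,2),(1,3)\}$; if $(a,b)$ avoids $1$, then to meet $\{1,2\}$ it must contain $2$ and to meet $\{1,3\}$ it must contain $3$, so $(a,b)=(2,3)$. Therefore $\mathcal G\subseteq\{(1,2),(1,3),(2,3)\}$ and $|\mathcal G|\le 3$. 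Putting the two bounds together with $\mathcal H\subseteq\mathcal G$ yields $|\mathcal H|=|\mathcal G|=3$.

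There is no serious obstacle in this claim; it is a short consequence of the structure already set up (the classification of vectors with a fixed support, shiftedness of $\mathcal W$, $\mathcal H\subseteq\mathcal G$, and cross-intersection of $\mathcal G,\mathcal H$). The only two points requiring a line of justification are (i) that the two shift steps really keep the vector inside $\mathcal W$ — true because an $(i,j)$-shift only moves a value to a smaller coordinate and creates no new $-1$ — and (ii) the trivial enumeration of pairs simultaneously meeting $\{1,2\},\{1,3\},\{2,3\}$.
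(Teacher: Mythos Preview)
Your proof is correct and follows exactly the paper's argument: shiftedness gives $\{(1,2),(1,3),(2,3)\}\subset\mathcal H\subset\mathcal G$, and the cross-intersecting property then forces $\mathcal G\subset\{(1,2),(1,3),(2,3)\}$ since no other $2$-element set meets all three of these pairs. You have simply spelled out in detail what the paper compresses into two sentences.
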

\begin{proof} By shiftedness $\{(1,2),(1,3),(2,3)\}\subset \mathcal H\subset\mathcal G$. The statement follows from the fact that no other 2-element set can intersect those three sets.
\end{proof}

Since $\mathcal W(\bar n) \overset{def}{=}\{\mathbf w\in\mathcal W: w_n = 0\}$ satisfies $|\mathcal W(\bar n)|\le F(n-1,3,0)$, $\mathbf v(2,3,n)\in\mathcal W$ and Claim \ref{cl15} imply
\begin{align*}&|\mathcal W|\le F(n-1,3,0)+6, \ \ \ \ \ \ \ \ \text{which gives}\\
&|\mathcal W|\le 8+6 = 14\ \  \qquad \qquad \ \ \ \ \ \ \text{for }\ n = 5,\\
&|\mathcal W|\le 14+6 <21\  \ \ \qquad  \ \ \text{for }\ n = 6,\ \text{ and}\\
&|\mathcal W|\le {n-1\choose 3}+6 < {n\choose 3}\ \ \ \ \ \ \ \text{for }\ n \ge 7.
\end{align*}
Consequently, we may suppose that the degree of $n$ in $\mathcal W$ is at least $7$, in particular, $\mathcal H$ is the star, $\mathcal H = \{(1,2),(1,3),\ldots,(1,p)\}$ for some $p\le n-1$.

The following lemma is obvious.
\begin{lem}\label{lem16} One of the following holds.\\
(i)\ \ \ $p =2,\ |\mathcal H| = 1,\ |\mathcal G|\le (n-2)+(n-3) = 2n-5$\\
(ii)\ \ $p =3,\ |\mathcal H| = 2,\ |\mathcal G|\le (n-2)+1 = n-1$\\
(iii) $p \ge 4,\ |\mathcal H| \le |\mathcal G|\le n-2$.
\end{lem}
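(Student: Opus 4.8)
The plan is a direct case analysis on the parameter $p$, using only the structure of $\mathcal H$ recorded just above the lemma and the fact that $\mathcal G$ and $\mathcal H$ are cross-intersecting --- that is, every $2$-set in $\mathcal G$ meets each of the sets $(1,2),(1,3),\ldots,(1,p)$. Throughout, all $2$-sets in question are subsets of $[n-1]$, so the family of $2$-subsets of $[n-1]$ containing the element $1$ has exactly $n-2$ members.

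First I would record the key combinatorial observation: a $2$-subset $\{a,b\}\subset[n-1]$ meets every set among $(1,2),\ldots,(1,p)$ if and only if $1\in\{a,b\}$ or $\{2,\ldots,p\}\subseteq\{a,b\}$. Indeed, if $1\in\{a,b\}$ then $\{a,b\}$ meets each $(1,j)$; and if $1\notin\{a,b\}$, then meeting $(1,j)$ forces $j\in\{a,b\}$ for every $j\in\{2,\ldots,p\}$, i.e.\ $\{2,\ldots,p\}\subseteq\{a,b\}$. Since $|\{a,b\}|=2$, the second alternative can occur only when $p\le 3$: for $p=2$ it says $2\in\{a,b\}$, for $p=3$ it says $\{a,b\}=\{2,3\}$, and for $p\ge 4$ it is impossible.

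With this in hand the three cases are immediate. If $p\ge 4$, the observation forces every member of $\mathcal G$ to contain $1$, so $|\mathcal G|\le n-2$, and then $|\mathcal H|\le|\mathcal G|$ because $\mathcal H\subseteq\mathcal G$; this is (iii). If $p=3$, then $\mathcal H=\{(1,2),(1,3)\}$ has size $2$, and $\mathcal G$ is contained in the $n-2$ sets of $[n-1]$ through $1$ together with the single set $\{2,3\}$, so $|\mathcal G|\le (n-2)+1=n-1$; this is (ii). If $p=2$, then $\mathcal H=\{(1,2)\}$ has size $1$, and a $2$-set in $\mathcal G$ either contains $1$ ($n-2$ choices) or is of the form $\{2,x\}$ with $x\in\{3,\ldots,n-1\}$ ($n-3$ choices), whence $|\mathcal G|\le (n-2)+(n-3)=2n-5$; this is (i).

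There is no real obstacle here --- the lemma is genuinely routine. The only points requiring a little care are to count $2$-subsets of $[n-1]$ rather than of $[n]$, and to invoke $\mathcal H\subseteq\mathcal G$ when deducing the bound on $|\mathcal H|$ in case (iii). The phrase ``one of the following holds'' is simply the partition of the admissible range $2\le p\le n-1$ into $p=2$, $p=3$, and $p\ge 4$, the value $p=1$ being excluded since $\mathcal H$ is non-empty.
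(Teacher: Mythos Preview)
Your argument is correct and is exactly the routine verification the paper has in mind; the paper itself simply declares the lemma ``obvious'' and gives no proof. Your key observation---that a $2$-set in $[n-1]$ meets every $(1,j)$ with $2\le j\le p$ iff it contains $1$ or contains all of $\{2,\ldots,p\}$---together with the counts and the inclusion $\mathcal H\subseteq\mathcal G$ is precisely what makes the three cases immediate.
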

Since for $n = 5$ in all cases $|\mathcal H|+|\mathcal G|\le 6$ holds, the proof of part (1) of Theorem \ref{thm3} is complete.
Also, for $n\ge 7$ it follows that $|\mathcal H|+|\mathcal G|\le 2n-4$. Since ${n\choose 3}-{n-1\choose 3}={n-1\choose 2}>2n-4$ for $n\ge 7$, the induction step works fine in this case too.\\

The only case that remains is $n = 6$. Using shiftedness and $a = 1$ for  all $(a,b,n)$ with $\mathbf v(a,b,n)\in \mathcal W$ it follows that $a = 1$ holds for all $(a,b,c)$ with $\mathbf v(a,b,c)\in\mathcal W$.

Define $\mathcal B = \{(b,c)\in{[6]-\{1\}\choose 2}: \mathbf v(1,b,c)\in\mathcal W\}$ and $\mathcal D = \{(d,e,f)\in{[2,6]\choose 3}: \mathbf u(a,b,c)\in\mathcal W\}.$
Note that $(b,c)\in\mathcal B$ and $D\in \mathcal D$ cannot satisfy $D\cap\{b,c\} = \{c\}$, since otherwise $\langle\mathbf u(d,e,f),\mathbf v(1,b,c)\rangle=-1$. Let us note that one can put into $\mathcal W$ all the vectors $\mathbf u(1,b,c)$ with $2\le b<c\le 6$ because all the vectors with a $-1$ position have $1$ in the first coordinate.

Therefore $|\mathcal W| = 10+|\mathcal B|+|\mathcal D|$ holds. We have to prove
\begin{equation}\label{eq44}|\mathcal B|+|\mathcal D|\le 11.
\end{equation}
Let us first exhibit the system with $21$ vectors showing $F(6,3,0)\ge 21$.
$$\mathcal U_6 = \{\mathbf u(1,b,c),\mathbf v(1,b,c):(b,c)\subset[2,6]\}\cup\{\mathbf u(2,3,4)\}.$$
We have $|\mathcal U_6| = 2\times{5\choose 2}+1 = 21$ and the only non-trivial scalar product to check is that $\langle\mathbf v(1,b,c),\mathbf u(2,3,4)\rangle\ge 0$. It is true automatically if $c\ge 5.$ For $c = 3,4$ it follows that $b\in \{2,3\}$ making the scalar product equal to $0$ as desired.

To avoid a tedious case by case analysis it is simplest to give a matching from the $9$-element set $\{D\in{[2,6]\choose 3}: D\ne (2,3,4)\}$ into the $10$-element set $\{(b,c):2\le b<c\le 6\}$ such that whenever $D$ and $(b,c)$ are matched, $D\cap (b,c) = \{c\}$ holds. This will prove (\ref{eq44}). An example of the desired matching is exhibited below.
\begin{align*}
&(2,3,5)\ - \ (4,5)\ \ \ \ \ (2,4,5)\ - \ (3,5)\ \ \ \ \ (2,3,6)\ - \ (4,6)\\
&(2,4,6)\ - \ (5,6)\ \ \ \ \ (2,5,6)\ - \ (3,6)\ \ \ \ \ (3,4,5)\ - \ (2,3)\\
&(3,4,6)\ - \ (2,4)\ \ \ \ \ (3,5,6)\ - \ (2,5)\ \ \ \ \ (4,5,6)\ - \ (2,6)
\end{align*}

A careful analysis shows that $\mathcal U_6$ is the only extremal configuration. The proof is complete.

\section{Proof of Theorem \ref{thm4}}


We may assume that the families $\mathcal A, \mathcal B$ are shifted and, thus, both contain $[k]$. We define $\mathcal A(T) = \{A\backslash T: A\in \mathcal A, A\supset T\}.$ Note that we had a similar notation for families of vectors, but now we are working with sets only, so it should not cause any confusion. We call a set $T\in {[n]\choose s}$ a \textit{kernel} for a set family $\mathcal S$, if $\mathcal S(T)$ contains $k+1$ pairwise disjoint edges.  An inequality proved in \cite{F3} states that
\begin{equation}\label{eq12}|\mathcal A(T)|\le k{n-s-1\choose k-s-1},\end{equation}
if $T$ is not a kernel for $\mathcal A$, and similarly for $\mathcal B(T)$.

The following claim is an easy application of the pigeon-hole principle.
\begin{cla}\label{cla12} Suppose that $C_0,\ldots, C_k$ form a sunflower with center  $T$, i.e. $C_i\cap C_j= T$ for all $0\le i<j\le k$. Suppose $D$ is a $k$-element set. Then there exists an $i$ such that $D\cap C_i = D\cap T$ holds.
\end{cla}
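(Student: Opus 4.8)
The statement is a clean pigeon-hole argument, so the plan is to count how many of the petals $C_i \setminus T$ can possibly meet the set $D$. First I would observe that the petals $P_i := C_i \setminus T$ for $0 \le i \le k$ are pairwise disjoint, since $C_i \cap C_j = T$ for $i \ne j$. Moreover each $P_i$ is disjoint from $T$ by definition. Now consider the set $D \setminus T$: it has at most $|D| = k$ elements. Since the $k+1$ sets $P_0, \dots, P_k$ are pairwise disjoint, the sets $(D \setminus T) \cap P_i$ are pairwise disjoint subsets of $D \setminus T$, and hence at most $k$ of them can be non-empty. Therefore there exists an index $i$ with $(D \setminus T) \cap P_i = \emptyset$, i.e. $D \cap (C_i \setminus T) = \emptyset$.

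It remains to upgrade this to the claimed equality $D \cap C_i = D \cap T$. Since $C_i = T \cup P_i$ with $T \cap P_i = \emptyset$, we have $D \cap C_i = (D \cap T) \cup (D \cap P_i)$, and $D \cap P_i = \emptyset$ gives $D \cap C_i = D \cap T$ for this $i$, which is exactly what is asserted.

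I do not expect any genuine obstacle here: the only thing to be careful about is that the counting is done with respect to $D \setminus T$ rather than $D$ itself, so that the part of $D$ lying inside the common center $T$ does not interfere with the disjointness count of the petals. (If one counted inside all of $D$, the elements of $D \cap T$ would be "shared" by every $C_i$ and the pigeon-hole step would fail.) Beyond that, the argument is a one-line application of the fact that $k+1$ pairwise disjoint sets cannot all meet a fixed $k$-element set.
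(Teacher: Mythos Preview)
Your argument is correct and is precisely the pigeon-hole reasoning the paper has in mind; the paper does not spell out a proof beyond calling the claim ``an easy application of the pigeon-hole principle,'' and your write-up is exactly that application. One tiny simplification: since each petal $P_i$ is already disjoint from $T$, one has $D\cap P_i=(D\setminus T)\cap P_i$ automatically, so the distinction you draw between counting in $D$ versus $D\setminus T$ is not actually needed.
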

We immediately get the following corollary:

\begin{cor}\label{cor12} If $|T| = s$ and $|D\cap C_i|\ge s$ for all $0\le i\le k$, then $T\subset D$.
\end{cor}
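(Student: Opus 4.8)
The plan is to read off the corollary essentially for free from Claim \ref{cla12}; all the combinatorial work (a pigeon-hole count over the $k+1$ petals of the sunflower against the $k$ elements of $D$) is already packaged in that claim, and what remains is a one-line size comparison. So I would not introduce any new machinery.

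Concretely, I would first apply Claim \ref{cla12} to the given sunflower $C_0,\ldots,C_k$ with center $T$ and to the $k$-set $D$, obtaining an index $i$ with $0\le i\le k$ for which $D\cap C_i = D\cap T$. Passing to cardinalities and invoking the hypothesis $|D\cap C_i|\ge s$ yields $|D\cap T| = |D\cap C_i|\ge s$. On the other hand $D\cap T\subseteq T$ and $|T| = s$ force $|D\cap T|\le s$. Combining the two inequalities gives $|D\cap T| = s = |T|$, and since $D\cap T\subseteq T$ with both sets finite and of equal cardinality, we get $T = D\cap T$, i.e. $T\subseteq D$. That is exactly the assertion.

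I do not expect any genuine obstacle here: the only subtlety is the elementary observation that a containment of finite sets with equal cardinalities is an equality, and the hypothesis $|T|=s$ (rather than $|T|\le s$) is precisely what lets one go from ``$|D\cap T|\ge s$'' to ``$T\subseteq D$''. In the final write-up I would present this as a single short paragraph consisting of the three inequalities $|D\cap T| = |D\cap C_i|\ge s$, $|D\cap T|\le |T| = s$, hence $D\cap T = T$, citing Claim \ref{cla12} for the choice of $i$.
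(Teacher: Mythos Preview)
Your argument is correct and is exactly the intended one: the paper states the corollary as an immediate consequence of Claim \ref{cla12} without further proof, and your size comparison $|D\cap T| = |D\cap C_i|\ge s = |T|$ is precisely the one-line justification implicit there.
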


\begin{lem}\label{lem12} If $T_1$ and $T_2$ are kernels for $\mathcal A$ and $\mathcal B$  respectively, then $T_1 = T_2$ holds.
\end{lem}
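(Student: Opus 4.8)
The plan is to use the cross-$s$-intersecting condition together with the sunflower structure guaranteed by the kernel hypothesis. Suppose $T_1$ is a kernel for $\mathcal A$: then $\mathcal A(T_1)$ contains $k+1$ pairwise disjoint $(k-s)$-sets, say $E_0,\dots,E_k$, so that $A_i := T_1\cup E_i \in \mathcal A$ for $0\le i\le k$ and these $A_i$ form a sunflower with center $T_1$. Similarly, if $T_2$ is a kernel for $\mathcal B$ we obtain $B_0,\dots,B_k\in\mathcal B$ forming a sunflower with center $T_2$. Since $\mathcal A$ and $\mathcal B$ are $s$-cross-intersecting, $|A_i\cap B_j|\ge s$ for all $i,j$.

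First I would fix any single $B_j$, which is a $k$-set, and apply Corollary~\ref{cor12} to the sunflower $A_0,\dots,A_k$ with center $T_1$ (using $|B_j\cap A_i|\ge s=|T_1|$ for all $i$): this gives $T_1\subset B_j$. As this holds for every $j$, we get $T_1\subset B_j$ for all $0\le j\le k$, and since the $B_j$ form a sunflower with center $T_2$, their common intersection is exactly $T_2$; hence $T_1\subset T_2$, and since $|T_1|=|T_2|=s$ we conclude $T_1=T_2$. (Symmetrically one could run the argument the other way, fixing an $A_i$ and using the $B_j$-sunflower to force $T_2\subset A_i$ for all $i$, hence $T_2\subset T_1$; either direction alone suffices given the equal cardinalities.)

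The only point that needs a little care is making sure Corollary~\ref{cor12} applies verbatim, i.e.\ that the sets $A_0,\dots,A_k$ genuinely form a sunflower with center $T_1$ and each has size $k$: the disjointness of the petals $E_i$ (from the definition of kernel, $\mathcal A(T_1)$ contains $k+1$ pairwise disjoint edges of size $k-s$) together with $T_1$ being disjoint from each $E_i$ gives precisely $A_i\cap A_j = T_1$ for $i\ne j$, so Claim~\ref{cla12} and its corollary apply to the $k$-set $D=B_j$. There is no real obstacle here; the lemma is essentially an immediate consequence of the pigeon-hole corollary once the kernels are unpacked into explicit sunflowers.
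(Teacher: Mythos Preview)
Your proof is correct and follows essentially the same approach as the paper: use Corollary~\ref{cor12} on the sunflower in $\mathcal A$ with center $T_1$ to force $T_1\subset B$ for every $B\in\mathcal B$, then intersect over (at least two) members of the sunflower in $\mathcal B$ to get $T_1\subset T_2$, and conclude by equal cardinalities. The paper's version is just slightly terser, taking two sets $B_1,B_2\in\mathcal B$ with $B_1\cap B_2=T_2$ rather than the full sunflower, but the argument is identical.
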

\begin{proof} From Corollary \ref{cor12} it follows that for any $B\in \mathcal B$ we have $T_1\subset B$. Applying it for $B_1,B_2\in \mathcal B$, $B_1\cap B_2 = T_2$, we get that $T_1\subset B_1\cap B_2 = T_2$, but since $|T_1| = |T_2|$, we have $T_1 = T_2$.
\end{proof}

From Corollary \ref{cor12} and Lemma \ref{lem12} it follows that if both $\mathcal A$ and $\mathcal B$ have kernels, then $|\mathcal A|+|\mathcal B|\le 2{n-s\choose k-s}$, which is smaller then the bound in (\ref{eqthm4}). From now on we may w.l.o.g. assume that $\mathcal B$ does not have a kernel and that $\mathcal B$ contains the set $[k]$ as an element. Then any kernel of $\mathcal A$ must be a subset of $[k]$. Let $\mathcal J\subset{[k]\choose s}$ be the family of kernels of $\mathcal A$.
Define $\tilde{\mathcal A} = \{A\in \mathcal A:\nexists T\in \mathcal J, A\supset T\}$.

\begin{cla}\label{cla21} We have $|\tilde{\mathcal A}| = O(n^{k-s-1})$. Analogously, we have $|\mathcal B| = O(n^{k-s-1})$.
\end{cla}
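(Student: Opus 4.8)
The plan is to derive both bounds from one simple observation: being $s$-cross-intersecting with a single fixed $k$-set forces every member of the other family to contain one of at most $\binom ks$ prescribed $s$-sets, and in both cases each of these $s$-sets fails to be a kernel, so that the inequality (\ref{eq12}) applies to it.

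First I would bound $|\mathcal B|$. Since $\mathcal A$ is shifted and non-empty it contains $[k]$, and the $s$-cross-intersecting condition gives $|[k]\cap B|\ge s$ for every $B\in\mathcal B$. Hence each $B\in\mathcal B$ contains some $T\in\binom{[k]}{s}$, that is, $\mathcal B=\bigcup_{T\in\binom{[k]}{s}}\{B\in\mathcal B:B\supset T\}$. By hypothesis $\mathcal B$ has no kernel, so none of these $T$ is a kernel for $\mathcal B$, and (\ref{eq12}) (applied to $\mathcal B$) gives $|\mathcal B(T)|\le k\binom{n-s-1}{k-s-1}$ for each. Summing over the at most $\binom ks$ sets $T$ yields $|\mathcal B|\le\binom ks k\binom{n-s-1}{k-s-1}=O(n^{k-s-1})$.

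For $|\tilde{\mathcal A}|$ I would run the symmetric argument with the anchor set $[k]$, which now lies in $\mathcal B$. Every $A\in\mathcal A$ satisfies $|A\cap[k]|\ge s$, so $A\supset T$ for some $T\in\binom{[k]}{s}$; and if $A\in\tilde{\mathcal A}$, then by the very definition of $\tilde{\mathcal A}$ no $T'\in\mathcal J$ can satisfy $A\supset T'$, so in particular this $T$ is not in $\mathcal J$, i.e. it is not a kernel for $\mathcal A$. Consequently $\tilde{\mathcal A}\subseteq\bigcup_{T\in\binom{[k]}{s}\setminus\mathcal J}\{A\in\mathcal A:A\supset T\}$, and (\ref{eq12}) bounds each term $|\mathcal A(T)|$ by $k\binom{n-s-1}{k-s-1}$; summing over the at most $\binom ks$ non-kernel $s$-subsets of $[k]$ gives $|\tilde{\mathcal A}|\le\binom ks k\binom{n-s-1}{k-s-1}=O(n^{k-s-1})$.

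There is essentially no real obstacle: the content is entirely carried by (\ref{eq12}) together with the standing assumptions that $\mathcal B$ has no kernel and that $[k]\in\mathcal A\cap\mathcal B$. The only things to keep straight are which anchor $k$-set to use in each half and the fact that one may only invoke (\ref{eq12}) for $s$-sets that are genuinely non-kernels --- precisely what the no-kernel hypothesis on $\mathcal B$ and the definition of $\tilde{\mathcal A}$ supply. The implied constant $\binom ks\, k$ depends only on $k$ and $s$.
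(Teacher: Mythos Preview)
Your proof is correct and follows essentially the same approach as the paper: both use the anchor set $[k]\in\mathcal A\cap\mathcal B$ to force every member of the other family to contain some $T\in\binom{[k]}{s}$, and then apply inequality~(\ref{eq12}) to the non-kernel $s$-sets. Your write-up is in fact slightly more explicit than the paper's, which sums $|\tilde{\mathcal A}(T)|$ over all $T\in\binom{[k]}{s}$ and leaves implicit that the kernel terms vanish by the definition of $\tilde{\mathcal A}$.
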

\begin{proof} Any set from $\tilde{\mathcal A}$ must intersect $[k]$ in at least $s$ elements. Thus, $|\tilde{\mathcal A}|\le \sum_{T\in{[k]\choose s}} |\tilde{\mathcal A}(T)|$, which by inequality (\ref{eq12}) is at most ${[k]\choose s} k{n-s-1\choose k-s-1} = O(n^{k-s-1})$. The same proof works for $\mathcal B$.
\end{proof}

Due to Claim \ref{cla12} and the fact that $\mathcal A$ is $t$-intersecting, for any set $A\in\mathcal A$ and any $T\in\mathcal J$ we have $|A\cap T|\ge t$. Moreover, repeating the proof of Lemma \ref{lem12}, it is easy to see that for any $T_1,T_2\in \mathcal J$ we have $|T_1\cap T_2|\ge t$. Therefore, $|\mathcal J|\le m(k,s,t)$ (see the formulation of the Complete Intersection Theorem). Due to Claim \ref{cla21}, we have
\begin{equation}\label{eq66}|\mathcal A|+|\mathcal B|\le |\mathcal J| {n-k\choose k-s}+O(n^{k-s-1}).\end{equation}
 Indeed, the only additional thing one has to note is that the number of sets from $\mathcal A$ that intersect $[k]$ in at least $s+1$ elements is $O(n^{k-s-1})$.

 On the other hand, it is easy to exhibit an example of a family $\mathcal A$ that is $t$-intersecting and is $s$-cross-intersecting with $\{[k]\}$, and which has cardinality $m(k,s,t){n-k\choose k-s}$. For that one just has to take a maximum $t$-intersecting family $\mathcal J'$ of $s$-element sets in $[k]$ and put $\mathcal A = \{A\in{[n]\choose k}: \exists T\in \mathcal J': A\cap [k]=T\}.$

 Therefore, if  $|\mathcal J|<m(k,s,t)$, then by (\ref{eq66}) and the previous construction $|\mathcal A|+|\mathcal B|$ cannot be maximal for large $n$. Therefore, from now on we may assume that $\mathcal J = \mathcal A_i(k,s,t)$ for some $0\le i\le s-t$, where $|\mathcal A_i(k,s,t)| = m(k,s,t)$.

We remark that out of $\mathcal A_j(k,s,t)$ the only family that satisfies $|\mathcal A_j(k,s,t)| = |\mathcal A_j(k+1,s,t)|$ is the family $\mathcal A_{s-t}(k,s,t)$. Indeed, it consists of all the sets that have all their $s$ elements among the first $2s-t$ elements and does not use elements $j$ with $j>2s-t$. For all the other families, as we have already mentioned, the degree of each $j\in[k]$ is positive.\\

Suppose first that $0\le i<s-t$ and that $|\mathcal A_{s-t}(k,s,t)|<m(k,s,t)$. Most importantly for us, it means that $m(k-1,s,t)<m(k,s,t)$ due to the discussion in the previous paragraph. Then it is easy to see that $|\mathcal B| = 1$. Indeed, if $B_1,B_2\in\mathcal B$, then due to Corollary \ref{cor12} we know that  $T\subset B_1\cap B_2$ for each $T\in\mathcal J$. Since $|B_1\cap B_2|\le k-1$, we have $|\mathcal J|\le m(k-1,s,t)<m(k,s,t)$.

We showed that the extremal pair of families satisfies $\mathcal B = \{[k]\}$. To complete the proof in this case and to show that  the pair of extremal families is one of $\mathcal A_i,\mathcal B_i$ for $0\le i<s-t$, we have to verify that $\{A\cap [k]:A\in \mathcal A\}\subset\mathcal M_i(k,s,t)$ provided that $\mathcal J = \mathcal A_i(k,s,t)$.

Take any set $A$ such that $A\notin\mathcal M_i(k,s,t),$ that is, $|A\cap [k]|<k-s+t$ and $|A\cap [t+2i]|<t+i$. The following claim completes the proof in the case $m(k,s,t)$ is attained on $\mathcal M_i(k,s,t)$ with $i<s-t$.
\begin{cla} In the notations above, there is $T\in \mathcal J = \mathcal A_i(k,s,t)$ such that $|T\cap A|\le t-1$.
\end{cla}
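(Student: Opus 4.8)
The plan is to exhibit an explicit kernel $T\in\mathcal A_i(k,s,t)$ whose intersection with $A$ has size at most $t-1$, using the two deficiency hypotheses $|A\cap[t+2i]|\le t+i-1$ and $|A\cap[k]|\le k-s+t-1$ separately. Write $a_1=|A\cap[t+2i]|$ and recall that a generic member of $\mathcal A_i(k,s,t)$ is obtained by choosing $t+i$ of the coordinates in $[t+2i]$ together with $s-t-i$ further coordinates from $[t+2i+1,k]$. So I have freedom to place $t+i$ elements of $T$ inside $[t+2i]$ (which I will try to avoid $A$ as much as possible) and $s-t-i$ elements of $T$ inside $[t+2i+1,k]$ (which I will also try to keep off $A$).

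First I would handle the part of $T$ lying in $[t+2i]$. Since $|A\cap[t+2i]|=a_1\le t+i-1$, the set $[t+2i]\setminus A$ has at least $2i+1-a_1\ge 2i+1-(t+i-1)=i-t+2$ elements; more usefully, $[t+2i]$ has $t+2i$ elements of which at most $t+i-1$ lie in $A$, so I can choose the $t+i$ coordinates of $T$ inside $[t+2i]$ so that at most $(t+i)-\bigl((t+2i)-a_1\bigr)=a_1-i$ of them lie in $A$ — that is, $|T\cap A\cap[t+2i]|\le \max\{0,a_1-i\}$. Next, for the $s-t-i$ elements of $T$ in $[t+2i+1,k]$: the number of elements of $[t+2i+1,k]$ lying in $A$ is $|A\cap[k]|-a_1\le (k-s+t-1)-a_1$, so I can pick the $s-t-i$ coordinates of $T$ there avoiding $A$ entirely as long as $(s-t-i)\le (k-2i-t)-\bigl((k-s+t-1)-a_1\bigr)$, i.e. $a_1\ge i-1$; combining the two regimes, a short case split on whether $a_1\le i-1$ or $a_1\ge i$ yields $|T\cap A|\le \max\{0,a_1-i\}\le t-1$ in every case, since $a_1\le t+i-1$.

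The main obstacle is bookkeeping the two budgets simultaneously: when $a_1$ is large (close to $t+i-1$) the first region already contributes up to $a_1-i$ forced intersections, and I must make sure the second region contributes nothing, which is exactly where the bound $|A\cap[k]|\le k-s+t-1$ is used; conversely when $a_1$ is small the first region is free but I have to check the second region can still avoid $A$. So the delicate point is verifying that the inequality $a_1\ge i-1$ (needed to clear the second region) together with $a_1\le t+i-1$ (the deficiency bound, since $t\ge1$ forces $a_1\le t+i-1$) leaves no gap — i.e. that the two cases $a_1\le i-1$ and $i\le a_1\le t+i-1$ genuinely exhaust all possibilities and in each the resulting bound is $\le t-1$. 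Once this arithmetic is pinned down, the claim follows, and with it the containment $\{A\cap[k]:A\in\mathcal A\}\subset\mathcal M_i(k,s,t)$, completing the identification of the extremal pair with $\mathcal A_i,\mathcal B_i$ in the case $i<s-t$.
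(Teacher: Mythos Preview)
Your approach has a genuine gap. First, there is an arithmetic slip: simplifying
\[
(s-t-i)\ \le\ (k-2i-t)-\bigl((k-s+t-1)-a_1\bigr)
\]
yields $a_1\ge t+i-1$, not $a_1\ge i-1$. Combined with the standing hypothesis $a_1\le t+i-1$, this means the second region can be cleared of $A$ only in the single boundary case $a_1=t+i-1$; your case split ``$a_1\le i-1$ versus $a_1\ge i$'' does not cover what actually happens.

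More seriously, restricting the trial kernel $T$ to have \emph{exactly} $t+i$ elements in $[t+2i]$ can fail outright. Take $k=10$, $s=5$, $t=1$, $i=2$ (so $i<s-t$ and $k\ge 2s-t$) and let $A\cap[k]=\{6,7,8,9,10\}$. Then $a_1=0<3=t+i$ and $|A\cap[k]|=5=k-s+t-1$, so both deficiency hypotheses hold. Any $T\in{[10]\choose 5}$ with $|T\cap[5]|=3$ is forced to take two elements from $\{6,\dots,10\}\subset A$, giving $|T\cap A|=2>t-1=0$. Yet a valid witness exists: $T=[5]\in\mathcal A_2(10,5,1)$ satisfies $|T\cap A|=0$. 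The point is that $|T\cap[5]|=5>t+i$ here, so your ``generic'' shape misses it.

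The repair is to let $|T\cap[t+2i]|$ vary. The paper does this by taking $T$ to be an interval $[l',\,l'+s-1]$ with $1\le l'\le i+1$ (automatically in $\mathcal A_i(k,s,t)$) and sliding $l'$; increasing $l'$ decreases the overlap with the left block of $A$ and increases the overlap with the right block, and the two deficiency bounds together force some position with $|T\cap A|\le t-1$. Alternatively, your greedy idea can be salvaged by first packing $T\cap[t+2i]$ with as many non-$A$ elements as possible up to $\min\{s,\,t+2i\}$ (not stopping at $t+i$), and only then completing $T$ in $[t+2i+1,k]$; redoing the count with this extra flexibility gives $|T\cap A|\le t-1$ in all cases.
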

\begin{proof}
W.l.o.g., let $A = [1,l]\cup [m+1,k]$, where $l\le t+i-1$ and $l+m< k-s+t$. Set $T = [l',l'+s-1]$ with $l'\le i+1$ and with $l'$ chosen in such a way that $|T\cap A|$ is minimized. It is easy to see that, first, $T\in\mathcal J$ and, second, $|\mathcal T\cap A| = \max\{l-i,l+m+s-k\}<t$. \end{proof}\bigskip

If the only maximal family is $\mathcal A_{s-t}(k,s,t)$, then $\mathcal J = \mathcal A_{s-t}(k,s,t)$ and $\mathcal A$ cannot contain a set $A$ that satisfy $|A\cap [2s-t]|<s$. Indeed, then $A$ intersects one of the $T\in \mathcal J$  in less than $t$ elements, which by Claim \ref{cla12} means that there exists $A'\in \mathcal A$ such that $|A'\cap A|<t,$ a contradiction. Therefore, the family $\mathcal A$ is contained in $\mathcal A_{s-t}$, and we are only left to prove that $\mathcal B\subset \mathcal B_{s-t}$. Assume that $\mathcal B$ contains $B$, such that $B\nsupseteq [2s-t]$. But then there is $T\in\mathcal J$ such that $|B\cap T|\le s-1$, a contradiction.\\

The last case that remains to verify is when both $\mathcal A_{s-t}(k,s,t)$ and $\mathcal A_{s-t-1}(k,s,t)$ have size $m(k,s,t)$. But either $\mathcal J = \mathcal A_{s-t-1}(k,s,t)$ or $\mathcal J = \mathcal A_{s-t}(k,s,t)$ and we end up in one of the two scenarios described above.

It is clear from the proof that the families that we constructed are the only possible extremal families (excluding the case $n=2k,s=1$), which is mostly due to the uniqueness in the formulation of the Complete Intersection Theorem.\\

{\sc Acknowledgements. } We thank Danila Cherkashin and Sergei Kiselev for pointing out the error in the original version of Theorem~\ref{thm1}.

\end{document}